\newtheorem{theorem}{Theorem}
\newtheorem{definition}[theorem]{Definition}
\newtheorem{example}[theorem]{Example}
\newtheorem{lemma}[theorem]{Lemma}
\newtheorem{remark}[theorem]{Remark}
\newenvironment{proof}[1][Proof]{\noindent\textbf{#1.} }{\ \rule{0.5em}{0.5em}}
\renewcommand\sb{\subseteq}
\newcommand\sm{\setminus}
\newcommand\mto{\mapsto}
\newcommand\lmto{\longmapsto}
\newcommand\cC{\mathcal{C}}
\newcommand\cS{\mathcal{S}}
\renewcommand\atop[2]{\genfrac{}{}{0pt}{}{#1}{#2}}
\newcommand{\bbinom}[2]{\genfrac{[}{]}{0pt}{}{#1}{#2}}
\newcommand{\ebinom}[2]{\genfrac{\langle}{\rangle}{0pt}{}{#1}{#2}}
\DeclareMathOperator\sn{sn}
\DeclareMathOperator\des{des}
\DeclareMathOperator\cdes{cdes}
\DeclareMathOperator\cs{\mathfrak{s}}
\DeclareMathOperator\cl{\mathfrak{l}}
\DeclareMathOperator\crr{\mathfrak{r}}
\DeclareMathOperator\sech{sech}
\begin{document}

\title{Moments of quantum L\'{e}vy areas using sticky shuffle
Hopf algebras}
\author{Robin Hudson \\
%EndAName
Loughborough University,\\
Loughborough LE11 3TU\\
Great Britain \and Uwe Schauz and Wu Yue \\
%EndAName
Xi'an Jiaotong-Liverpool University,\\
Suzhou,\\
China}
\maketitle

\begin{abstract}
We study a family of quantum analogs of L\'{e}vy's stochastic area for planar
Brownian motion depending on a variance parameter $\sigma \geq 1$ which deform
to the classical L\'{e}vy area as $\sigma \rightarrow \infty .$ They are defined as
second rank iterated stochastic integrals against the components of planar Brownian
motion, which are one-dimensional Brownian motions satisfying Heisnberg-type
commutation relations. Such iterated integrals\ can be multiplied using the sticky
shuffle product determined by the underlying It\^{o} algebra of stochastic differentials.
We use the corresponding Hopf algebra structure to evaluate the moments of the
quantum L\'{e}vy areas and study how they deform \ to their classical values, which
are well known to be given essentially by the Euler numbers, in the infinite variance
limit.

\emph{Keywords: }L\'{e}vy area, non-Fock quantum stochastic calculus, moments,
sticky shuffles, Euler numbers.

\emph{MSC classifications:} 81S25, 46L53.
\end{abstract}

\section{Introduction.}

L\'{e}vy's stochastic area for planar Brownian motion is important in several areas of
modern mathematics and probability theory, ranging from harmonic analysis on the
Heisenberg group to rough noise analysis.

Let us first review the definition of L\'{e}vy area as a stochastic integral \cite{Levy1}.
Intuitively it is the signed area between the chord joining two time points on the planar
Brownian path and the trajectory between those
points. To make this rigorous, let there be given a planar Brownian motion $%
B $ and write $B=\left( X,Y\right) $ in terms of components $X$ and $Y$ which are
independent one-dimensional Brownian motions. Let two real numbers $a<b$ be
given.
\begin{definition}\label{classicLA}
 \emph{The L\'{e}vy area of }$B$\emph{\ over the time
interval }$[a,b)$\emph{\ is the stochastic integral }%
\begin{equation*}
\mathcal{A}_{[a,b)}=\dfrac{1}{2}\int_{a}^{b}\Bigl(\left( X-X(a)
\right) dY-\left( Y-Y( a) \right) dX\Bigr).  \label{Levyarea}
\end{equation*}
\end{definition}

In this definition the integral takes the same value whether it is regarded as of It\^{o} or
Stratonovich type, but in the remainder of this paper all stochastic integrals will be of
It\^{o} type, in contrast to \cite{LeWi} where the Stratonovich integral is used. The latter
cannot be defined coherently in a quantum context.

L\'{e}vy area has interesting connections with classical mathematics through its
characteristic function which is given by the following theorem.

\begin{theorem}\label{Levy Sech}
(L\'{e}vy \cite{Levy})
\begin{equation*}
\mathbb{E}\!\left[\exp \left(iz\mathcal{A}_{[a,b)}\right)\right]\,=\,\sech\!\left(%
\tfrac{1}{2}%
\left( b-a\right) z\right).  %\label{levysech}
\end{equation*}
\end{theorem}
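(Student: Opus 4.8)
The plan is to exploit the scaling and independence structure of planar Brownian motion to reduce to a single ODE for the characteristic function. First I would normalise: by Brownian scaling, $\mathcal{A}_{[a,b)}$ has the same law as $\tfrac{1}{2}(b-a)\,\mathcal{A}_{[0,1)}$, so it suffices to compute $\phi(z)=\mathbb{E}[\exp(iz\mathcal{A}_{[0,1)})]$ and show it equals $\sech(z/2)$. I would then set up the problem via conditioning: condition on the entire path of $X$ on $[0,1]$. Given $X$, the process $Y$ is still a Brownian motion independent of $X$, and $\mathcal{A}_{[0,1)} = \tfrac12\int_0^1 X\,dY - \tfrac12\int_0^1 Y\,dX$. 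The second term, after integration by parts, combines with the first so that conditionally on $X$ the area is a Gaussian integral in $dY$; more precisely $\int_0^1 X\,dY - \int_0^1 Y\,dX = \int_0^1 (2X(t) - \text{something}) \,dY(t)$ up to boundary terms, hence conditionally Gaussian with a variance that is an explicit quadratic functional of the path $X$.

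The key step is then to evaluate $\mathbb{E}[\exp(-\tfrac{z^2}{8} V(X))]$ where $V(X)$ is that quadratic functional of the Gaussian process $X$; this is a Gaussian (Cameron–Martin / Girsanov) computation whose answer is the Fredholm determinant of the associated covariance operator. Equivalently — and this is the route I would actually carry out — one introduces the martingale $M_t = \mathbb{E}[\exp(iz\mathcal{A}_{[0,t)}) \mid \mathcal{F}_t]$, writes $M_t = \exp(iz\mathcal{A}_{[0,t)})\,f(t, X(t), Y(t))$ for an unknown smooth $f$, and applies the two-dimensional Itô formula. Setting the $dt$-coefficient to zero (the drift must vanish since $M$ is a martingale) yields a linear second-order PDE for $f$ in the variables $(X,Y)$; by rotational symmetry and the quadratic nature of the exponent one looks for $f$ of the Gaussian form $f(t,x,y)=A(t)\exp\bigl(-\tfrac12 B(t)(x^2+y^2)\bigr)$, and the PDE collapses to a Riccati equation $B' = B^2 - \tfrac{z^2}{4}$ for $B$ together with a first-order equation $A'/A = -B$ for $A$. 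The terminal condition at $t=1$ is $f(1,\cdot,\cdot)\equiv 1$, i.e. $A(1)=1$, $B(1)=0$.

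Solving the Riccati equation with $B(1)=0$ gives $B(t) = \tfrac{z}{2}\tanh\bigl(\tfrac{z}{2}(t-1)\bigr)$, and then $A(t) = \cosh\bigl(\tfrac{z}{2}(t-1)\bigr)^{-1}$ from $A'/A=-B$ and $A(1)=1$. Evaluating at $t=0$, where $\mathcal{A}_{[0,0)}=0$ and $X(0)=Y(0)=0$, gives $\phi(z) = M_0 = A(0) = \sech(z/2)$, and undoing the scaling yields $\mathbb{E}[\exp(iz\mathcal{A}_{[a,b)})] = \sech\bigl(\tfrac12(b-a)z\bigr)$. The main obstacle is the honest justification that $M_t$ as defined is a genuine martingale (not merely a local martingale) so that $M_0 = \mathbb{E}[M_1]$ is legitimate — this needs a uniform integrability or $L^2$ bound, which follows because $|M_t| \le |A(t)|\exp(-\tfrac12\mathrm{Re}\,B(t)(X_t^2+Y_t^2))$ with $\mathrm{Re}\,B(t)\ge 0$ on $[0,1]$, keeping everything bounded; the rest is the routine Riccati integration sketched above.
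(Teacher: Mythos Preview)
Your approach is sound and classical---essentially the It\^o/PDE derivation of L\'evy's formula---but note that the paper does not actually prove this theorem: it is stated as a cited result of L\'evy, and the only contact the paper makes with it is the remark that computing all moments of $\mathcal{A}_{[0,1)}$ (as Levin--Wildon do, and as this paper recovers in the classical limit $\sigma\to\infty$ via sticky shuffle combinatorics) is ``tantamount to proving'' it. So your route is different in kind: you reduce to a Riccati ODE via It\^o's formula and a Gaussian ansatz, whereas the paper's machinery would establish the result by identifying each even moment with $\bigl(\tfrac{b-a}{2}\bigr)^{2m}A_{2m}$ through a combinatorial count of forth-back permutations. Your argument is shorter and self-contained; the paper's method is what generalises to the noncommutative setting, which is the point of the paper.

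A few slips to clean up, none fatal. Brownian scaling gives $\mathcal{A}_{[a,b)}\stackrel{d}{=}(b-a)\,\mathcal{A}_{[0,1)}$, not $\tfrac12(b-a)\,\mathcal{A}_{[0,1)}$; your final formula is still correct. Your martingale should be $M_t=\mathbb{E}\bigl[\exp(iz\mathcal{A}_{[0,1)})\mid\mathcal{F}_t\bigr]$, not with $\mathcal{A}_{[0,t)}$ inside the expectation (that quantity is already $\mathcal{F}_t$-measurable, so your $M_t$ as written is not what you want). Finally, your stated Riccati $B'=B^2-\tfrac{z^2}{4}$ and your stated solution $B(t)=\tfrac{z}{2}\tanh\bigl(\tfrac{z}{2}(t-1)\bigr)$ do not match: that $B$ satisfies $B'=\tfrac{z^2}{4}-B^2$. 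With the ansatz $f=A\exp\bigl(-\tfrac12 B(x^2+y^2)\bigr)$ and the drift computation done carefully, one gets $A'/A=B$ and $B'=B^2-\tfrac{z^2}{4}$, whose solution with $B(1)=0$ is $B(t)=-\tfrac{z}{2}\tanh\bigl(\tfrac{z}{2}(t-1)\bigr)\ge 0$ on $[0,1]$ for real $z$; this sign is exactly what makes your boundedness argument for the true-martingale property go through, whereas with your sign $\mathrm{Re}\,B\le 0$ and the bound fails. With those corrections the proof is complete.
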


We can expand the right-hand side of the formula in Theorem\,\ref{Levy Sech} using
the Taylor series
\begin{equation}
\sech(z)\,=\,\sum_{m=0}^{\infty }\left( -1\right) ^{m}\frac{A_{2m}}{\left(
2m\right) !}z^{2m}\ ,
\end{equation}%
where the even Euler zigzag numbers $A_{2m}$ are related to the Riemann zeta %- Euler-zigzag --> Euler zigzag
function $\zeta $ by%
\begin{equation}
\zeta \left( 2m\right) =\frac{\pi ^{2m}}{\left( 2m\right) !}A_{2m}.
\end{equation}
%Here the Euler-Zigzag numbers $A_{2m}$ for even $2m$ are
%given by%
%\begin{equation}
%A_{0}=1,A_{2}=1,A_{4}=5,A_{6}=61,A_{8}=1385,...\text{ .}
%\end{equation}

Levin and Wildon in \cite{LeWi} used iterated integrals and combinatorial
arguments arising from the formalism of rough noise to evaluate the moments of $\mathcal{%
A}_{[0,1)},$ which is tantamount to proving Theorem\,\ref{Levy Sech}.

A one-parameter family of \emph{quantum L\'{e}vy areas} has been introduced recently
\cite{Huds2,ChHu}. In these the component one-dimensional Brownian motions of the
classical L\'{e}vy area are replaced by a pair of self adjoint operator-valued processes
$\left( P^{\left( \sigma \right) }\left( t\right) ,Q^{\left( \sigma \right) }\left( t\right) \right)
_{t\geq 0}.$ Each such pair is determined by a variance parameter $\sigma $ taking a
value in the range $1\leq \sigma \,<\infty .$ Each of $P^{\left( \sigma \right) }$ and
$Q^{\left( \sigma \right) }$ is individually a one-dimensional Brownian motion of
variance $\sigma ^{2},$ so that for example, for each positive time $t,$ $P^{\left( \sigma
\right) }\left( t\right) $ is a Gaussian random variable of mean $0$ and variance $\sigma
^{2}t$. But the processes $P^{\left( \sigma \right) }$ and $Q^{\left( \sigma \right) }$ do
not commute with each other; instead they satisfy the
Heisenberg type commutation relation%
\begin{equation}
\left[ P^{\left( \sigma \right) }\left( s\right) ,Q^{\left( \sigma \right)
}\left( t\right) \right] =-2i\min \left\{ s,t\right\}   \label{Heisenberg}
\end{equation}%
in the rigorous Weyl sense that for arbitrary real $x$ and $y$ and
nonnegative $s$ and $t,$%
\begin{equation}
e^{ixP^{\left( \sigma \right) }\left( s\right) }e^{iyQ^{\left( \sigma
\right) }\left( t\right) }=e^{2ixy\min \left\{ s,t\right\} }e^{iyQ^{\left(
\sigma \right) }\left( t\right) }e^{ixP^{\left( \sigma \right) }\left(
s\right) }
\end{equation}%
as unitary operators. Despite their mutual noncommutativity $P^{\left( \sigma \right) }$
and $Q^{\left( \sigma \right) }$ can be regarded as stochastically independent in a
certain sense, and hence as the two components of a \emph{quantum planar Brownian
motion. }Indeed\emph{,} for arbitrary real $x_{1},x_{2},...,x_{m},$ $y_{1},y_{2},...,y_{n}$
and
nonnegative $s_{1},s_{2},...,s_{m},$ $t_{1},t_{2},...,t_{n},$ the operator $%
\sum x_{j}P^{\left( \sigma \right) }\left( s_{j}\right) $ $+\sum y_{k}Q^{\left( \sigma \right)
}\left( t_{k}\right) $ defined on the intersection of domains is essentially self-adjoint, so
that the quantum probabilistic expectation $\mathbb{E}\left[ e^{i\left( \sum x_{j}P^{\left(
\sigma \right) }\left( s_{j}\right) +\sum y_{k}Q^{\left( \sigma \right) }\left( t_{k}\right) \right)
}\right] $, which in effect determines the \emph{joint characteristic function}, is well
defined. Moreover this factorizes:
\begin{equation}
\mathbb{E}\left[ e^{i\left( \sum x_{j}P^{\left( \sigma \right) }\left(
s_{j}\right) +\sum y_{k}Q^{\left( \sigma \right) }\left( t_{k}\right)
\right) }\right]\, \,=\mathbb{E}\left[ e^{i\sum x_{j}P^{\left( \sigma \right)
}\left( s_{j}\right) }\right] \mathbb{E}\left[ e^{i\sum y_{k}Q^{\left(
\sigma \right) }\left( t_{k}\right) }\right]
\end{equation}%
and in classical probability such factorization is sufficient for independence.

We use the standard quantum stochastic calculus of \cite{Part} in the case when
$\sigma =1$ and the non-Fock finite temperature calculus of \cite{HuLi} when $\sigma
>1$ to define the corresponding quantum L\'{e}vy areas, in
which the planar Brownian motion is replaced by its quantum version $%
R^{\left( \sigma \right) }=\left( P^{\left( \sigma \right) },Q^{\left( \sigma \right) }\right) .$
%Thus, let there be given two real numbers $a<b$.
\begin{definition}\label{QuantumLA}
\emph{The quantum} \emph{L\'{e}vy area }$\mathcal{B}%
_{[a,b)}^{\left( \sigma \right) }$\emph{\ of }$R^{\left( \sigma \right) }$%
\emph{\ of variance }$\sigma $ \emph{over the time interval }$[a,b)$ \emph{is the
quantum stochastic integral }%
\begin{equation*}
\mathcal{B}_{[a,b)}^{\left( \sigma \right) }\,=\,\dfrac{1}{2}\int_{a}^{b}\left((
P^{\left( \sigma \right) }-P^{\left( \sigma \right) }(a))dQ^{\left( \sigma
\right) }-( Q^{\left( \sigma \right) }-Q^{\left( \sigma \right)
}(a)) dP^{\left( \sigma \right) }\right) .  %\label{qLevyarea}
\end{equation*}%
\end{definition}

When $\sigma =1,$
%$ $P^{\left( \sigma \right) }$ and $Q^{\left( \sigma \right)
% }$ are the momentum and position of the Brownian motion of the Fock space quantum
%stochastic calculus \cite{Part}.
the distribution at all times of the corresponding L\'{e}vy areas is degenerate at $t=0$
and all moments are zero \cite{ChHu}. For values $\sigma >1$ the processes $R^{\left(
\sigma \right) }
$ generate Type III factorial von Neumann algebras\footnote{%
More precisely, it is the the unitary operators $e^{ixP^{\left( \sigma \right) }\left( s\right)
}$ and $e^{ixQ^{\left( \sigma \right) }\left( s\right) }$ for real $x$ and positive $\sigma $
which generate these algebras.}, whose mutual strong unitary inequivalence as
$\sigma $ varies can be regarded as a quantum version of the mutual singularity of the
measures obtained by dilatation of planar Wiener measure through different dilatation
factors $\sigma .$

In view of \ (\ref{Heisenberg}), the normalised standard unit variance Brownian motions
\begin{equation}
\hat{P}^{\left( \sigma \right) }=\sigma ^{-1}P^{\left( \sigma \right) }\text{%
, }\hat{Q}^{\left( \sigma \right) }=\sigma ^{-1}Q^{\left( \sigma \right) },
\end{equation}%
become mutually commutative in the limit of large $\sigma ,$ so that the corresponding
quantum L\'{e}vy areas
\begin{equation}
\mathcal{\hat{B}}_{[a,b)}^{\left( \sigma \right) }=\sigma ^{-2}\mathcal{B}%
_{[a,b)}^{\left( \sigma \right) }
\end{equation}%
interpolate between the degenerate distribution at $\sigma =1$ and the classical case
$\mathcal{A}_{[a,b)}$ at $\infty .$ Thus it is a natural question to ask how the moments
behave under this interpolation and in particular how the Euler-zigzag numbers are
approached at $\infty $. The main purpose of this paper is to address this question.

Our method is based firstly on the observation that Definition\,\ref{classicLA},
Definition\,\ref{QuantumLA}, and the normalized form of the latter,
can be regarded as \emph{iterated} stochastic integrals:%
\begin{eqnarray}
\mathcal{A}_{[a,b)} &=&\dfrac{1}{2}\int_{a\,<x<\,y<\,b}\Big(
dX(x)dY(y)-dY\left( x\right) dX\left( y\right) \Big) ,  \label{classical}
\\
\mathcal{B}_{[a,b)}^{\left( \sigma \right) } &=&\dfrac{1}{2}%
\int_{a\,<x<\,y<\,b}\left( dP^{\left( \sigma \right) }\left( x\right)
dQ^{\left( \sigma \right) }\left( y\right) -dQ^{\left( \sigma \right)
}\left( x\right) dP^{\left( \sigma \right) }\left( y\right) \right) ,
\label{quantum} \\
\mathcal{\hat{B}}_{[a,b)}^{\left( \sigma \right) } &=&\dfrac{1}{2}%
\int_{a\,<x<\,y<\,b}\left( d\hat{P}^{\left( \sigma \right) }\left( x\right) d%
\hat{Q}^{\left( \sigma \right) }\left( y\right) -d\hat{Q}^{\left( \sigma
\right) }\left( x\right) d\hat{P}^{\left( \sigma \right) }\left( y\right)
\right) .  \label{quantumhat}
\end{eqnarray}%
We may thus evaluate moments as expectations of powers, using the so-called
\emph{sticky shuffle }\cite{Huds1} or \emph{stuffle }\cite{Hoff} Hopf algebra.
Multiplication in this algebra can be used to express the product of two iterated It\^{o}
stochastic integrals as a linear combination of such iterated integrals. Since the
expectation of an iterated integral vanishes unless each of the individual integrators is
time, the \emph{recovery formula} \cite{Huds1,Bour}\ involving higher order Hopf %- \cite...
algebra coproducts reduces the evaluation of the moments to a combinatorial counting
problem.

The sticky shuffle Hopf algebra is reviewed in Section 2 and its use for reducing the
evaluation of\ moments to a counting problem is described in Section 3. We review
some combinatorial results needed to accomplish this counting in
Section 4. In section 5 we evaluate the moments of the quantum L\'{e}vy area (\ref%
{quantumhat}). Finally in Section 6 we show how the classical moments \cite{LeWi} are
recovered in the "infinite temperature" limit as $\sigma \rightarrow \infty .$

\section{The sticky shuffle product Hopf algebra} %- removed the "."
\subsection{The shuffle product Hopf algebra}
Given a complex vector space $\mathcal{L}$,\ the usual \textit{shuffle product Hopf
algebra} over $\mathcal{L}$\ is formed by equipping the vector space
$\mathcal{T}(\mathcal{L})=\bigoplus\limits_{n=0}^{\infty
} %\left(
\bigotimes\limits_{j=1}^{n}\mathcal{L}\ %\right)
$ %-  removed brackets
of tensors of all ranks over $\mathcal{L}$ with the operations of product, unit,
coproduct, counit and antipode defined as follows. We denote a general element
$\alpha$ of $\mathcal{T}\left( \mathcal{L}\right)$ by $\alpha =\alpha _{0}\oplus \alpha
_{1}\oplus \alpha _{2}\oplus \cdots $ or $\left( \alpha _{0},\alpha _{1},\alpha
_{2},...\right),$ where only finitely many of the $\alpha _{m}$
are nonzero. For each $\alpha _{m}\in\bigotimes \limits_{j=1}^{m}%
\mathcal{L}$ the corresponding embedded element $\left( 0,0,...,\alpha _{m},0,...\right)
$ of $\mathcal{T}\left( \mathcal{L}\right) $ is denoted by $\left\{ \alpha _{m}\right\} .$

\begin{itemize}
\item The \emph{shuffle product} is defined by bilinear extension of the rule%
\begin{eqnarray}
&&\!\!\!\!\!\!\!\!\left\{ L_{1}\otimes L_{2}\otimes \cdots \otimes L_{m}\right\} \left\{
L_{m+1}\otimes L_{m+2}\otimes \cdots \otimes L_{m+n}\right\}   \notag \\
&=&\sum\limits_{\mathfrak{s}\in \mathcal{S}\left( m,n\right) }\left\{ L_{%
\mathfrak{s}\left( 1\right) }\otimes L_{\mathfrak{s}\left( 2\right) }\otimes
\cdots \otimes L_{\mathfrak{s}(m+n)}\right\}   \label{shuffle}
\end{eqnarray}%
where $\mathcal{S}\left( m,n\right) $ denotes the set of $(m,n)$-\emph{%
shuffles}, that is permutations $\mathfrak{s}$ of $\left\{
1,2,...,m+n\right\} $ for which $\mathfrak{s}\left( 1\right) <\mathfrak{s}%
\left( 2\right) <\cdots <\mathfrak{s}\left( m\right) $ and $\mathfrak{s}%
\left( m+1\right) <\mathfrak{s}\left( m+2\right) <\cdots <\mathfrak{s}\left( m+n\right) .$

\item The \textit{unit element} for this product is $1_{\mathcal{T}\left( \mathcal{L}\right)
    }=\left( 1_{\mathbb{C}},0,0,...\right) $.

\item The \textit{coproduct} $\Delta $ is the map from $\mathcal{T}\left(
    \mathcal{L}\right) $ to $\mathcal{T}\left( \mathcal{L}\right) \otimes \mathcal{T}\left(
    \mathcal{L}\right) $ defined by linear extension of the rules that $\Delta \left(
    1_{\mathcal{T}\left( \mathcal{L}\right) }\right) =1_{\mathcal{T}\left( \mathcal{L}\right)
    }\otimes 1_{\mathcal{T}\left( \mathcal{L}\right) }=1_{\mathcal{T}\left(
    \mathcal{L}\right) \otimes \mathcal{T}\left( \mathcal{L}\right) }$ and
\begin{eqnarray}
&&\!\!\!\!\!\!\!\!\Delta \left\{ L_{1}\otimes L_{2}\otimes \cdots \otimes L_{m}\right\} \nonumber\\
&=&1_{\mathcal{T}\left( \mathcal{L}\right) }\otimes \left\{ L_{1}\otimes
L_{2}\otimes \cdots \otimes L_{m}\right\}\nonumber \\
&&+\sum\limits_{j=2}^{m}\left\{ L_{1}\otimes L_{2}\otimes \cdots \otimes
L_{j-1}\right\} \otimes \left\{ L_{j}\otimes L_{j+1}\otimes \cdots \otimes
L_{m}\right\} \nonumber\\
&&+\left\{ L_{1}\otimes L_{2}\otimes \cdots \otimes L_{m}\right\} \otimes 1_{%
\mathcal{T}\left( \mathcal{L}\right) } .
%&=&\sum\limits_{j=1}^{m}\left\{ L_{1}\otimes L_{2}\otimes \cdots \otimes
%L_{j-1}\right\} \otimes \left\{ L_{j}\otimes L_{j+1}\otimes \cdots \otimes
%L_{m}\right\} .
\end{eqnarray}

\item The \textit{counit} $\varepsilon $ is the map from $\mathcal{T}\left(
\mathcal{L}\right) $ to $\mathbb{C}$\ defined by linear extension of%
\begin{equation}
\varepsilon \left( 1_{\mathcal{T}\left( \mathcal{L}\right) }\right) =1_{%
\mathbb{C}}\,,\text{ }\varepsilon \left\{ L_{1}\otimes L_{2}\otimes \cdots
\otimes L_{m}\right\} =0\,\text{ for }m>0.
\end{equation}

\item The \textit{antipode} is the map $S$ from\textit{\ }$\mathcal{T}\left(
    \mathcal{L}\right) $ to $\mathcal{T}\left( \mathcal{L}\right) $ defined by
linear extension of%
\begin{eqnarray}
S\left( 1_{\mathcal{T}\left( \mathcal{L}\right) }\right) &=&1_{\mathcal{T}%
\left( \mathcal{L}\right) }, \\
\text{ }S\left\{ L_{1}\otimes L_{2}\otimes \cdots \otimes L_{m}\right\}
&=&(-1)^{m}\left\{ L_{m}\otimes L_{m-1}\otimes \cdots \otimes L_{1}\right\}\quad
\end{eqnarray}%
for $m>0.$
\end{itemize}

There are two useful equivalent definitions of the shuffle product. For the first, we use
the notational convention that, for arbitrary
elements $\alpha $ of $\mathcal{T}\left( \mathcal{L}\right) $ and $L$ of $%
\mathcal{L,}$ $\alpha \otimes L$ is the element of $\mathcal{T}\left(
\mathcal{L}\right) $ for which $\left( \alpha \otimes L\right) _{0}=0$ and $%
\left( \alpha \otimes L\right) _{n}=\alpha _{n-1}\otimes L$ for $n\geq 1.$ Then the shuffle
product of arbitrary elements of $\mathcal{T}\left( \mathcal{L}\right) $ is defined
inductively by bilinear extension of the
rules%
\begin{eqnarray}
&&\!\!\!\!\!\!\!\!1_{\mathcal{T}\left( \mathcal{L}\right) }\left\{ L_{1}\otimes L_{2}\otimes
\cdots \otimes L_{m}\right\} =\left\{ L_{1}\otimes L_{2}\otimes \cdots
\otimes L_{m}\right\} 1_{\mathcal{T}\left( \mathcal{L}\right) } \notag\\
&=&\left\{ L_{1}\otimes L_{2}\otimes \cdots \otimes L_{m}\right\} ,
\end{eqnarray}%
\begin{eqnarray}\label{SSP}
&&\!\!\!\!\!\!\!\!\left\{ L_{1}\otimes L_{2}\otimes \cdots \otimes L_{m}\right\} \left\{
L_{m+1}\otimes L_{m+2}\otimes \cdots \otimes L_{m+n}\right\} \notag \\
&=&\left( \left\{ L_{1}\otimes L_{2}\otimes \cdots \otimes L_{m-1}\right\}
\left\{ L_{m+1}\otimes L_{m+2}\otimes \cdots \otimes L_{m+n}\right\} \right)
\otimes L_{m}  \\
&&+\left( \left\{ L_{1}\otimes L_{2}\otimes \cdots \otimes L_{m}\right\}
\left\{ L_{m+1}\otimes L_{m+2}\otimes \cdots \otimes L_{m+n-1}\right\}
\right) \otimes L_{m+n}.  \notag
\end{eqnarray}%
Here the two terms on the right-hand side of (\ref{SSP}) correspond to the mutually
exclusive and exhaustive possibilities that $\mathfrak{s}(m+n)=m$ and
$\mathfrak{s}(m+n)=m+n$ in the expansion (\ref{shuffle}). The second alternative
definition is that the shuffle product $\gamma =\alpha \beta $ of arbitrary elements
$\alpha $ and $\beta $ is given by
\begin{equation}\label{alternative}
\gamma _{N}\,=\sum_{\stackrel{A\cup B=\{1,2,...,N\}}{A\cap B=\emptyset} }\alpha
_{\left\vert A\right\vert }^{A}\beta _{\left\vert B\right\vert }^{B}.
\end{equation}%
Here the sum is over the $2^{N}$ ordered pairs $(A,B)$ of disjoint subsets
whose union is $\left\{ 1,2,...,N\right\} $ and the notation is as follows; $%
\left\vert A\right\vert $ denotes the number of elements in the set $A$ so that $\alpha
_{\left\vert A\right\vert }$ denotes the homogeneous component of rank $\left\vert
A\right\vert $ of the tensor $\alpha =\left( \alpha _{0},\alpha _{1},\alpha _{2},...\right) ,$
and $\alpha _{\left\vert A\right\vert }^{A}$ indicates that this component is to be
regarded as occupying only those $\left\vert A\right\vert $ copies of $\mathcal{L}$
within $\bigotimes\limits_{j=1}^{N}\mathcal{L}$ labelled by elements of the subset $A$
of $\{1,2,...,N\}.$ Thus with $\beta _{\left\vert B\right\vert }^{B}$ defined analogously the
combination $\alpha _{\left\vert A\right\vert
}^{A}\beta _{\left\vert B\right\vert }^{B}$ is a well-defined element of $%
\bigotimes\limits_{j=1}^{N}\mathcal{L}$.
\subsection{The sticky shuffle algebra}
Now suppose that the complex vector space $\mathcal{L}$ is an associative
algebra. We define the \textit{sticky shuffle product} in the vector space $%
\mathcal{T}\left( \mathcal{L}\right) $ by modifying definition (\ref%
{SSP}) by inserting an extra term so that now%
\begin{eqnarray}\label{sticky}
&&\!\!\!\!\!\!\!\!\!\!\!\!\!\left\{ L_{1}\otimes L_{2}\otimes \cdots \otimes L_{m}\right\} \left\{
L_{m+1}\otimes L_{m+2}\otimes \cdots \otimes L_{m+n}\right\}  \notag \\
&=&\left( \left\{ L_{1}\otimes \cdots \otimes L_{m-1}\right\}
\left\{ L_{m+1}\otimes \cdots \otimes L_{m+n}\right\} \right)
\otimes L_{m}  \notag \\
&&+\left( \left\{ L_{1}\otimes \cdots \otimes L_{m}\right\}
\left\{ L_{m+1}\otimes \cdots \otimes L_{m+n-1}\right\}
\right) \otimes L_{m+n}  \label{sticky} \\
&&+\left( \left\{ L_{1}\otimes \cdots \otimes L_{m-1}\right\}
\left\{ L_{m+1}\otimes \cdots \otimes L_{m+n-1}\right\}
\right) \otimes L_{m}L_{m+n}.  \notag
\end{eqnarray}%
Or we can modify the alternative definition of the shuffle product (\ref%
{alternative}) by defining the product $\gamma =\alpha \beta $ by%
\begin{equation}\label{shuffle'}
\gamma _{N}\,=\sum_{A\cup B=\{1,2,...,N\}}\alpha _{\left\vert A\right\vert
}^{A}\beta _{\left\vert B\right\vert }^{B}.
\end{equation}%
Here the sum is now over the $3^{N}$ not necessarily disjoint ordered pairs $%
(A,B)$ whose union is $\{1,2,...,N\},$ $\alpha _{\left\vert A\right\vert }^{A}$ and $\beta
_{\left\vert B\right\vert }^{B}$ are defined as before but now if $A\cap B\neq \emptyset $
double occupancy of a copy of $\mathcal{L}$ within
$\bigotimes\limits_{j=1}^{n}\mathcal{L}$ is reduced to single occupancy by using the
multiplication in the algebra $\mathcal{L}$ as a map from $\mathcal{L\times L}$ to
$\mathcal{L.}$ Thus the sticky shuffle product
reduces to the usual shuffle product in the case when the multiplication in $%
\mathcal{L}$ is trivial with all products vanishing. That (\ref{shuffle'}) is equivalent to
(\ref{sticky}) (and in particular, that (\ref{alternative}) is equivalent to (\ref{SSP})) is seen
by noting that the three terms on the right hand side of (\ref{sticky}) correspond to the
three mutually exclusive and exhaustive possibilities that $N\in A\cap B^{c},$ $N\in
A^{c}\cap B$ and $N\in A\cap B$ in (\ref{shuffle'}).

The same unit, coproduct and counit as before can be applied to make the sticky
shuffle product algebra into a Hopf algebra, but the definition of the antipode must be
modified \cite{Huds1} to
\begin{eqnarray}
&&\!\!\!\!\!\!\!\!(-1)^{m}S\left\{ L_{1}\otimes L_{2}\otimes \cdots \otimes L_{m}\right\} \notag\\
&=&\left\{ L_{m}\otimes L_{m-1}\otimes \cdots \otimes L_{1}\right\}
\,+\,\sum_{r=1}^{m}\ \sum_{1\leq k_{1}\,<k_{2}<\cdots <k_{r-1}<m} \\
&&\left\{ L_{k_{r-1}+1}L_{k_{r-1}+2}...L_{m}\otimes
L_{k_{r-2}+1}L_{k_{r-2}+2}...L_{k_{r-1}}\otimes \cdots \otimes
L_{1}L_{2}...L_{k_{1}}\right\} .\notag
\end{eqnarray}

The \emph{recovery formula }\cite{Bour}\emph{\ }expresses the homogeneous
components of an element $\alpha $ of $\mathcal{T}\left( \mathcal{L}\right) $ in terms of
the iterated coproduct $\Delta ^{(N)}\alpha$ by
\begin{equation}
\alpha _{N}=\left( \Delta ^{(N)}\alpha \right) _{(1,1,...,\overset{(N)}{1})}.
\label{recovery}
\end{equation}%
Here, $\Delta ^{(N)}$ is defined recursively by
\begin{equation}%-
\Delta ^{(2)}=\Delta\quad\text{and}\quad\Delta ^{(N)}=\left( \Delta \otimes \mbox{Id}_{\otimes ^{(N-2)}\mathcal{T}(\mathcal{L})}\right)
\circ \Delta ^{(N-1)}\quad\text{for}\quad N>2\,.
\end{equation}
Hence, it is a map from $\mathcal{T}\left( \mathcal{L} \right) $ to the $N$th tensor
power
\begin{equation}
\bigotimes\!^{(N)}\mathcal{T}(\mathcal{L})
\,=\,\bigotimes\!^{(N)}\bigoplus\limits_{n=0}^{\infty}\ \bigotimes\limits_{j=1}^{n}\mathcal{L}
\,=\bigoplus\limits_{n_{1},n_{2},...,n_{N}=0}^{\infty}\ \bigotimes_{r=1}^{N}\ \bigotimes\limits_{j_{r}=1}^{n_{r}}\mathcal{L}%
\end{equation}%-
%\begin{equation}
%\bigotimes\!^{(N)}\left( \mathcal{T}\left( \mathcal{L}\right) \right)
%\,=\,\bigotimes\!^{(N)}\left( \bigoplus\limits_{n=0}^{\infty }\left(
%\bigotimes\limits_{j=1}^{n}\mathcal{L}\right) \right)
%\,=\bigoplus\limits_{n_{1},n_{2},...,n_{N}=0}^{\infty
%}\ \bigotimes_{r=1}^{N}\left( \bigotimes\limits_{j_{r}=1}^{n_{r}}\mathcal{L}%
%\right)
%\end{equation}%
so that $\Delta ^{(N)}\alpha $ has multirank components $\alpha _{\left(
n_{1},n_{2},...,n_{N}\right) }$ of all orders. The recovery formula (\ref%
{recovery}) also holds when $N=0$ and $N=1$ if we define $\Delta ^{(0)}$ and
$\Delta ^{(1)}$ to be the counit $\varepsilon $ and the identity map $\mbox{Id}_{%
\mathcal{T}\left( \mathcal{L}\right) }$ respectively.

Note that the recovery formula is the same for both the sticky and nonsticky cases; it
only involves the coproduct $\Delta $ which is one and the same map. However our
application of it will use the fact that\ $\Delta $ is multiplicative, $\Delta \left( \alpha \beta
\right) =\Delta \left( \alpha \right) \Delta \left( \beta \right)$, where the product on the
tensor square $\mathcal{T}\left( \mathcal{L}\right)\otimes\mathcal{T}\left(
\mathcal{L}\right)$ is defined by linear extension of the rule
\begin{equation}\label{eq.power}
(a\otimes a')(b\otimes b')\,=\,ab\otimes a'b'\,.
\end{equation}
This holds in particular with the sticky shuffle product as the product in
$\mathcal{T}\left( \mathcal{L}\right)$.

 \section{Moments and sticky shuffles.}

We now describe the connection between sticky shuffle products and iterated
stochastic integrals. We begin with the well-known fact that, for the
one-dimen\-sional Brownian motion $X$ and for $a\leq b,$%-
\begin{equation}
\left( X(b)-X(a\right) )^{2}\,=\,2\int_{a\leq x<b}\left( X(x)-X\left( a\right)
\right)dX(x)+\int_{a\leq x<b}dT(x),  \label{k}
\end{equation}%
where $T(x)=x$ is time. We introduce the \emph{It\^{o} algebra} $\mathcal{L=}%
\mathbb{C}\left\langle dX,dT\right\rangle $ of complex linear combinations of the basic
differentials $dX$ and $dT,$ which are multiplied according to the table
\begin{equation}
\begin{tabular}{c|c}
& $%
\begin{array}{cc}
dX & dT%
\end{array}%
$ \\ \hline
$%
\begin{array}{c}
dX \\
dT%
\end{array}%
$ & $%
\begin{array}{cc}
dT & 0 \\
0 & 0%
\end{array}%
$%
\end{tabular}%
,  \label{table}
\end{equation}%
together with the corresponding sticky shuffle Hopf algebra $\mathcal{T}%
\left( \mathcal{L}\right) .$ For each pair of real numbers $a<b,$ we introduce a map
$J_{a}^{b}$ from $\mathcal{T}\left( \mathcal{L}\right) $ to complex-valued random
variables on the probability space of the Brownian
motion $X$ by linear extension of the rule that, for arbitrary $%
dL_{1},dL_{2},\cdots dL_{m}\in \left\{ dX,dT\right\} $
\begin{eqnarray}
&&\!\!\!\!\!\!\!\!J_{a}^{b}\left\{ dL_{1}\otimes dL_{2}\otimes \cdots \otimes dL_{m}\right\}\notag
\\
&=&\int_{a\leq x_{1}<x_{2}<\dotsb
<x_{m}<b}dL_{1}(x_{1})\,dL_{2}(x_{2})\,dL_{3}(x_{3})\dotsm\,dL_{m}(x_{m}) \notag\\
&=&\int_{a}^{b}\!\!\dotsm\int_{a}^{x_{4}}\!\!\int_{a}^{x_{3}}\!\!\int_{a}^{x_{2}}
\!\!dL_{1}(x_{1})\,dL_{2}(x_{2})\,dL_{3}(x_{3})\dotsm\,\notag
dL_{m}(x_{m}) .
%&=&\int_{a\leq x_{1}<x_{2}<\cdots
%<x_{m}<b}dL_{1}(x_{1})dL_{2}(x_{2})...dL_{m}(x_{m}) \notag\\
%&=&\int_{a}^{b}\left\{ \int_{a}^{x_{m}} \left\{
%\int_{a}^{x_{m-1}}\dotsm\int_{a}^{x_{2}}dL_{1}\left( x_{1}\right)
%\dotsm dL_{m-2}\left( x_{m-2}\right) \right\}  dL_{m-1}\left(
%x_{m-1}\right) \right\}\notag \\
%&&dL_{m}\left( x_{m}\right) .
\end{eqnarray}%-
By convention $J_{a}^{b}$ maps the unit element of the algebra $\mathcal{T}%
\left( \mathcal{L}\right) $ to the unit random variable identically equal to 1.

Then (\ref{k}) can be restated as follows,
 \begin{equation}
J_{a}^{b}\left(\left\{
dX\right\} \right)J_{a}^{b}\left(\left\{
dX\right\} \right)\,=\,J_{a}^{b}\left(\left\{
dX\right\} \left\{
dX\right\}\right),
\end{equation}
using the fact that $\left\{ dX\right\} ^{2}=2\left\{ dX\otimes dX\right\} +\left\{ dT\right\} .$

The following more general Theorem is probably known to many classical and
quantum probabilists; the quantum version was first given in \cite{CoEH}.

\begin{theorem}\label{general}
For arbitrary $\alpha $ and $\beta $ in $\mathcal{T}\left( \mathcal{L}\right)$,
\begin{equation*} \label{j}
J_{a}^{b}(\alpha )J_{a}^{b}(\beta )\,=\,J_{a}^{b}(\alpha \beta )\,.
\end{equation*}
\end{theorem}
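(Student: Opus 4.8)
The plan is to prove the identity by induction on the total rank $m+n$, where $\alpha$ and $\beta$ are taken to be basis tensors $\{dL_1\otimes\cdots\otimes dL_m\}$ and $\{dL_{m+1}\otimes\cdots\otimes dL_{m+n}\}$; the general case then follows by bilinearity of $J_a^b$, of the sticky shuffle product, and of the ordinary product of random variables. The base cases, in which one of the two tensors is the unit (rank $0$), are immediate from the convention that $J_a^b$ sends $1_{\mathcal{T}(\mathcal{L})}$ to the constant random variable $1$, together with the corresponding unit rules for the sticky shuffle product.

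For the inductive step I would use the multiplicativity of the It\^o product of two iterated stochastic integrals whose innermost integrators are ``stuck'' at the same outermost time variable. Concretely, write $F(b)=J_a^b\{dL_1\otimes\cdots\otimes dL_m\}$ and $G(b)=J_a^b\{dL_{m+1}\otimes\cdots\otimes dL_{m+n}\}$ as It\^o integrals in the upper limit $b$: $dF = \bigl(J_a^\cdot\{dL_1\otimes\cdots\otimes dL_{m-1}\}\bigr)\,dL_m$ and $dG = \bigl(J_a^\cdot\{dL_{m+1}\otimes\cdots\otimes dL_{m+n-1}\}\bigr)\,dL_{m+n}$. The It\^o product formula $d(FG)=F\,dG + G\,dF + dF\,dG$ then produces exactly three terms, and the quadratic covariation term $dF\,dG$ is governed by the multiplication table (\ref{table}) — that is, $dL_m\,dL_{m+n}$ is $dT$ if both are $dX$ and $0$ otherwise, precisely the product in the It\^o algebra $\mathcal{L}$. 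Integrating from $a$ to $b$ and applying the induction hypothesis to each of the three (lower-rank) products of iterated integrals appearing inside, one matches term by term the three summands in the defining recursion (\ref{sticky}) for the sticky shuffle product; hence $F(b)G(b)=J_a^b(\alpha\beta)$.

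I expect the main obstacle to be purely expository rather than conceptual: stating the It\^o product formula $d(FG)=F\,dG+G\,dF+dF\,dG$ at the level of generality needed — simultaneously covering the classical Brownian case $\sigma=1$ handled in \cite{Part} and the non-Fock case $\sigma>1$ handled in \cite{HuLi}, and allowing the integrands to themselves be iterated quantum stochastic integrals — and checking that the cross term $dF\,dG$ really is computed by the entry of the table (\ref{table}) and not by some operator-ordering subtlety. In the quantum setting one must be slightly careful that the two factors $F$ and $G$ are built from the same noise (the same $dX$, or in the applications the same pair $dP^{(\sigma)},dQ^{(\sigma)}$) so that the It\^o correction is the ``diagonal'' one recorded in the table; once this is granted, the algebra of the inductive step is a mechanical matching of three terms against three terms. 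I would therefore isolate the product It\^o formula as the single analytic input, citing \cite{Part,HuLi} (and \cite{CoEH} for the quantum statement), and present the rank induction as the combinatorial heart of the argument.
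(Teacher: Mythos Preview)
Your proposal is correct and follows essentially the same approach as the paper: reduce to basis tensors by bilinearity, then induct on total rank using the It\^o product formula $d(\xi\eta)=(d\xi)\eta+\xi\,d\eta+(d\xi)(d\eta)$ together with the multiplication table for $\mathcal{L}$, matching the three resulting terms against the three summands in the recursive definition (\ref{sticky}) of the sticky shuffle product. The paper's proof is more terse but structurally identical; your additional remarks about the quantum extensions really pertain to the subsequent theorem rather than to this one, which is stated only for the classical one-dimensional It\^o algebra $\mathcal{L}=\mathbb{C}\langle dX,dT\rangle$.
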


\begin{proof}
By bilinearity it is sufficient to consider the case when
\begin{equation}
\alpha =\left\{ dL_{1}\otimes dL_{2}\otimes \cdots \otimes dL_{m}\right\}
,\ \beta =\left\{ dL_{m+1}\otimes dL_{m+2}\otimes \cdots \otimes
dL_{m+n}\right\}
\end{equation}%
for $dL_{1},dL_{2},\cdots ,dL_{m+n}\in \left\{ dX,dT\right\} .$ In this case
Theorem\,\ref{general} follows, using the inductive definition (\ref{sticky}) for the
sticky shuffle product, from the product form of It\^{o}'s formula,%
\begin{equation}
d\left( \xi \eta \right) \,=\,\left( d\xi \right) \eta +\xi d\eta +\left( d\xi
\right) d\eta   \label{ito}
\end{equation}%
where stochastic differentials of the form $d\xi =FdX+GdT,$ with stochastically
integrable processes $F$ and $G$, are multiplied using table (\ref%-
{table}).\bigskip
\end{proof}

For planar Brownian motion $R=\left( X,Y\right) $ the Ito table (\ref{table}%
)~becomes%
\begin{equation}
\begin{tabular}{c|c}
& $%
\begin{array}{cc}
\begin{array}{cc}
dX & dY%
\end{array}
& dT%
\end{array}%
$ \\ \hline
$%
\begin{array}{c}
\begin{array}{c}
dX \\
dY%
\end{array}
\\
dT%
\end{array}%
$ & $%
\begin{array}{cc}
\begin{array}{cc}
dT & 0  \\
0 & dT%
\end{array}
&
\begin{array}{c}
0 \\
0%
\end{array}
\\
\begin{array}{cc}
0\ \ & 0%
\end{array}
& 0%
\end{array}%
$%
\end{tabular}%
.  \label{table'}
\end{equation}%
For the quantum planar Brownian motion $\left( P^{\left( \sigma \right)
},Q^{\left( \sigma \right) }\right) $ it becomes%
\begin{equation}
\begin{tabular}{c|c}
& $%
\begin{array}{cc}
\begin{array}{cc}
dP^{\left( \sigma \right) } & dQ^{\left( \sigma \right) }%
\end{array}
& dT%
\end{array}%
$ \\ \hline
$%
\begin{array}{c}
\begin{array}{c}
dP^{\left( \sigma \right) } \\
dQ^{\left( \sigma \right) }%
\end{array}
\\
dT%
\end{array}%
$ & $%
\begin{array}{cc}
\begin{array}{cc}
\sigma ^{2}dT & -idT \\
idT & \sigma ^{2}dT%
\end{array}
&
\begin{array}{c}
0 \\
0%
\end{array}
\\
\begin{array}{cc}
0\ \ \ &\ 0%
\end{array}
& 0%
\end{array}%
$%
\end{tabular}%
.  \label{table''}
\end{equation}

\begin{theorem}
Theorem \ref{general} holds when $\mathcal{L}$ is either of the algebras defined by
the multiplication tables (\ref{table'}) and (\ref{table''}).
\end{theorem}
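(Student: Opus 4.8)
The proof is a direct adaptation of the argument already given for Theorem~\ref{general}, the only new ingredients being (i)~the verification that the tables (\ref{table'}) and (\ref{table''}) do define \emph{associative} algebras $\mathcal{L}$, so that the sticky shuffle product on $\mathcal{T}(\mathcal{L})$ is well defined, and (ii)~the replacement of the one-dimensional It\^o product formula by its planar, respectively quantum, counterpart. For (i), observe that in each of the two tables the product of any two basic differentials lies in $\mathbb{C}\,dT$, while $dT$ multiplied on either side by any basic differential is $0$; hence every product of three basic differentials vanishes and associativity is automatic. (The same remark applies to the original table (\ref{table}).)

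Next one defines the iterated integral map $J_{a}^{b}$ on $\mathcal{T}(\mathcal{L})$ exactly as in Section~3, now allowing the basic differentials $dL_{j}$ to range over $\{dX,dY,dT\}$ in the case of (\ref{table'}) and over $\{dP^{(\sigma)},dQ^{(\sigma)},dT\}$ in the case of (\ref{table''}); in the latter situation the relevant iterated integrals are the iterated quantum stochastic integrals of \cite{Part} (for $\sigma=1$) and of \cite{HuLi} (for $\sigma>1$), which exist on a suitable common invariant domain. By bilinearity it suffices to prove $J_{a}^{b}(\alpha)J_{a}^{b}(\beta)=J_{a}^{b}(\alpha\beta)$ when $\alpha=\{dL_{1}\otimes\cdots\otimes dL_{m}\}$ and $\beta=\{dL_{m+1}\otimes\cdots\otimes dL_{m+n}\}$ are basis tensors, and this is done by induction on $m+n$: one expands the left-hand side using the product form of It\^o's formula and matches the three resulting terms with the three terms of the inductive definition (\ref{sticky}) of the sticky shuffle product, exactly as in the proof of Theorem~\ref{general}.

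The product form of It\^o's formula $d(\xi\eta)=(d\xi)\eta+\xi\,d\eta+(d\xi)(d\eta)$ that drives this induction is, for the planar table (\ref{table'}), the standard multidimensional It\^o formula, the differentials $d\xi=F_{1}\,dX+F_{2}\,dY+G\,dT$ being multiplied according to (\ref{table'}). For the quantum table (\ref{table''}) it is the quantum It\^o product formula of \cite{Part} when $\sigma=1$ and of \cite{HuLi} when $\sigma>1$; the entries of (\ref{table''}) are obtained by writing $P^{(\sigma)}$ and $Q^{(\sigma)}$ in terms of the fundamental creation, annihilation and, for $\sigma>1$, thermal gauge processes, and reading off the mutual quantum It\^o products of their increments. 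No change to the combinatorial bookkeeping is needed, since the coefficient carried by each term of (\ref{sticky}) is simply the corresponding entry of the table.

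I expect the only genuine obstacle to be analytic rather than algebraic, and to arise solely in the quantum case: the iterated quantum stochastic integrals $J_{a}^{b}(\alpha)$ are unbounded operators, so one must first fix a dense domain --- for instance the linear span of exponential (coherent) vectors for the relevant Fock or non-Fock representation --- that is left invariant by all the operators involved, and check that the It\^o formula and the identification of products of integrals with iterated integrals hold as operator identities on that domain. Equivalently one may reduce the whole statement to matrix elements between coherent vectors, turning the quantum identity into a classical one to which the argument of Theorem~\ref{general} applies verbatim. Once this domain issue is disposed of, the proof is word for word the same as before.
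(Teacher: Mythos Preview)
Your proposal is correct and follows essentially the same approach as the paper: the paper simply remarks that in both cases the result follows from the corresponding It\^o product rule (\ref{ito}), citing the well-known classical formula for planar Brownian motion and \cite{Part}, \cite{HuLi} for the quantum cases $\sigma=1$ and $\sigma>1$ respectively. Your version is more detailed---in particular your check of associativity and your discussion of domain issues in the quantum case go beyond what the paper records---but the underlying argument is identical.
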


\begin{remark}
In both cases this follows from the corresponding It\^{o} product rule (\ref{ito}). For
classical planar Brownian motion the It\^{o}
product rule is well-known. For the quantum case, when $\sigma =1$ see \cite%
{Part} and when $\sigma >1$ see \cite{HuLi}.
\end{remark}

In view of (\ref{classical}) and (\ref{quantum})
\begin{eqnarray}
\mathcal{A}_{[a,b)} &=&\dfrac{1}{2}J_{a}^{b}(dX\otimes dY-dY\otimes dX) \\
\mathcal{B}_{[a,b)}^{\left( \sigma \right) } &=&\dfrac{1}{2}J_{a}^{b}\left(
dP^{\left( \sigma \right) }\otimes dQ^{\left( \sigma \right) }-dQ^{\left(
\sigma \right) }\otimes dP^{\left( \sigma \right) }\right)
\end{eqnarray}

For use below we note that the table (\ref{table'}) becomes%
\begin{equation}
\begin{tabular}{c|c}
& $%
\begin{array}{cc}
\begin{array}{cc}
dZ & d\bar{Z}%
\end{array}
& dT%
\end{array}%
$ \\ \hline
$%
\begin{array}{c}
\begin{array}{c}
dZ \\
d\bar{Z}%
\end{array}
\\
dT%
\end{array}%
$ & $%
\begin{array}{cc}
\begin{array}{cc}
0 &
%TCIMACRO{\U{bd}}%
%BeginExpansion
{\frac12}%
%EndExpansion
dT \\
%TCIMACRO{\U{bd}}%
%BeginExpansion
{\frac12}%
%EndExpansion
dT & 0%
\end{array}
&
\begin{array}{c}
0 \\
0%
\end{array}
\\
\begin{array}{cc}
0\ \ & \ \ 0%
\end{array}
& 0%
\end{array}%
$%
\end{tabular}
\label{mod}
\end{equation}%
in terms of the basis $\left( dZ,d\bar{Z},dT\right) $ where
\begin{equation}
dZ=\frac{1}{2}\left( -idX+dY\right),\ \ d\bar{Z}=\frac{1}{2}\left(
idX+dY\right) .
\end{equation}%
Correspondingly%
\begin{equation}
\mathcal{A}_{[a,b)}=iJ_{a}^{b}\left( dZ\otimes d\bar{Z}-d\bar{Z}\otimes
dZ\right)   \label{class}
\end{equation}%
Similarly (\ref{table''}) becomes%
\begin{equation}
\begin{tabular}{c|c}
& $%
\begin{array}{cc}
\begin{array}{cc}
\ \ \ dA^{\left( \sigma \right) }& \ \ \ \ \ \ \ \ \ \ \ dA^{\dagger \left( \sigma \right) }%
\end{array}
&\ \ \ \ \ dT%
\end{array}%
$ \\ \hline
$%
\begin{array}{c}
\begin{array}{c}
dA^{\left( \sigma \right) } \\
dA^{\dagger \left( \sigma \right) }%
\end{array}
\\
dT%
\end{array}%
$ & $%
\begin{array}{cc}
\begin{array}{cc}
0 &
%TCIMACRO{\U{bd}}%
%BeginExpansion
{\frac12}%
%EndExpansion
\left( \sigma ^{2}+1\right) dT \\
%TCIMACRO{\U{bd}}%
%BeginExpansion
{\frac12}%
%EndExpansion
\left( \sigma ^{2}-1\right) dT & 0%
\end{array}
&
\begin{array}{c}
0 \\
0%
\end{array}
\\
\begin{array}{cc}
0\ \ \ \ \ \  \ \  &\ \ \ \ \ \ \ \ 0%
\end{array}
& 0%
\end{array}%
$%
\end{tabular}%
.  \label{mod'}
\end{equation}%
in terms of the basis $( dA^{(\sigma) },dA^{\dagger(\sigma) },dT) $ where
\begin{equation}
\text{ }dA^{\left( \sigma \right) }=%
%TCIMACRO{\U{bd}}%
%BeginExpansion
{\frac12}%
%EndExpansion
\left( -idP^{\left( \sigma \right) }+dQ^{\left( \sigma \right) }\right)
,\ \ dA^{\dagger \left( \sigma \right) }=%
%TCIMACRO{\U{bd}}%
%BeginExpansion
{\frac12}%
%EndExpansion
\left( idP^{\left( \sigma \right) }+dQ^{\left( \sigma \right) }\right) .
\label{basis}
\end{equation}

For the basis $(d\hat{P}^{(\sigma) },d\hat{Q}^{(\sigma) },dT) ,$ (\ref{table''}) becomes%
\begin{equation}
\begin{tabular}{c|c}
& $%
\begin{array}{cc}
\begin{array}{cc}
\ \ d\hat{P}^{(\sigma)} &\ \ d\hat{Q}^{(\sigma)}%
\end{array}
&\ \ dT%
\end{array}%
$ \\ \hline
$%
\begin{array}{c}
\begin{array}{c}
d\hat{P}^{\left( \sigma \right) } \\
d\hat{Q}^{\left( \sigma \right) }%
\end{array}
\\
dT%
\end{array}%
$ & $%
\begin{array}{cc}
\begin{array}{cc}
dT & -i\sigma ^{-2}dT \\
i\sigma ^{-2}dT & dT%
\end{array}
&
\begin{array}{c}
0 \\
0%
\end{array}
\\
\begin{array}{cc}
0\ \ \ \ &\ \ \ \ 0%
\end{array}
& 0%
\end{array}%
$%
\end{tabular}%
.  \label{mod''}
\end{equation}%
which deforms to the classical table (\ref{table'}) as $\sigma \rightarrow
\infty .$ Similarly, for the basis $(d\hat{A}^{(\sigma)},d%
\hat{A}^{\dagger(\sigma) },dT) $ where
\begin{equation}
\text{ }d\hat{A}^{\left( \sigma \right) }=%
%TCIMACRO{\U{bd}}%
%BeginExpansion
{\frac12}%
%EndExpansion
\left( -i\,d\hat{P}^{\left( \sigma \right) }+d\hat{Q}^{\left( \sigma \right)
}\right) ,\ \ d\hat{A}^{\dagger \left( \sigma \right) }=%
%TCIMACRO{\U{bd}}%
%BeginExpansion
{\frac12}%
%EndExpansion
\left( i\,d\hat{P}^{\left( \sigma \right) }+d\hat{Q}^{\left( \sigma \right)
}\right)   \label{hat}
\end{equation}%
we have%
\begin{equation}
\begin{tabular}{c|c}
& $%
\begin{array}{cc}
\begin{array}{cc}
\ \ \ \ d\hat{A}^{\left( \sigma \right) }\  &\ \ \ d\hat{A}^{\dagger \left( \sigma \right) }%
\end{array}
&\ \ dT%
\end{array}%
$ \\ \hline
$%
\begin{array}{c}
\begin{array}{c}
d\hat{A}^{\left( \sigma \right) } \\
d\hat{A}^{\dagger \left( \sigma \right) }%
\end{array}
\\
dT%
\end{array}%
$ & $%
\begin{array}{cc}
\begin{array}{cc}
\ \  0 &
%TCIMACRO{\U{bd}}%
%BeginExpansion
\ \ \ \ \ %
%EndExpansion
\sigma_{\!+}\,dT \\
%TCIMACRO{\U{bd}}%
%BeginExpansion
\ \sigma_{\!-}\,dT\  &\ \ \ 0%
\end{array}
&
\begin{array}{c}
\  \ 0 \\
\ 0%
\end{array}
\\
\begin{array}{cc}
  0\ \ \ \ \ \ &
\ \ \ \ \  0%
\end{array}
&\ 0%
\end{array}%
$%
\end{tabular}
\label{hat'}
\end{equation}%
with
\begin{equation}
\sigma _{\pm }=%
%TCIMACRO{\U{bd}}%
%BeginExpansion
{\frac12}%
%EndExpansion
\left(1\pm \sigma ^{-2}\right)\,,
\end{equation}
which becomes isomorphic to (\ref{mod}) when $\sigma \rightarrow \infty .$
The normalized quantum L\'{e}vy area which is our main concern is%
\begin{equation}
\mathcal{\hat{B}}_{[a,b)}^{\left( \sigma \right) }\,=\,iJ_{a}^{b}\left( d\hat{A}%
^{\left( \sigma \right) }\otimes d\hat{A}^{\dagger \left( \sigma \right) }-d%
\hat{A}^{\dagger \left( \sigma \right) }\otimes d\hat{A}^{\left( \sigma
\right) }\right) .  \label{norm}
\end{equation}

In the following theorem, the basis referred to is any of those for
which the respective algebras have multiplication tables (\ref{table'}), (%
\ref{table''}), (\ref{mod}), (\ref{mod'}), (\ref{mod''}) or (\ref{hat'}).

\begin{theorem}\label{sz.LT}
For arbitrary $n\in \mathbb{N,}$ $a<b\in \mathbb{R}$ and basis elements $dL_{1}$,
$dL_{2}$, \dots, $dL_{n}$,
\begin{equation*}
\mathbb{E}\left[ J_{a}^{b}\left\{ dL_{1}\otimes dL_{2}\otimes ...\otimes
dL_{n}\right\} \right] =0
\end{equation*}%
unless%
\begin{equation*}
dL_{1}=dL_{2}=\cdots =dL_{n}=dT.
\end{equation*}
\end{theorem}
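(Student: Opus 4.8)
The plan is to prove the vanishing statement by induction on the rank $n$, exploiting the structure of the iterated It\^{o} integral $J_a^b$. The base case $n=1$ is immediate: $\mathbb{E}[J_a^b\{dL_1\}]=\mathbb{E}[\int_a^b dL_1(x_1)]$, which equals $\mathbb{E}[L_1(b)-L_1(a)]$. If $dL_1=dT$ this is $b-a$; otherwise $L_1$ is one of the (classical or quantum) Brownian components, which has mean zero in the relevant state, so the expectation vanishes. For the inductive step, write the iterated integral with the outermost integration performed last,
\begin{equation*}
J_a^b\{dL_1\otimes\cdots\otimes dL_n\}=\int_a^b \Bigl(J_a^{x}\{dL_1\otimes\cdots\otimes dL_{n-1}\}\Bigr)\,dL_n(x),
\end{equation*}
so that the integrand is an adapted process multiplied by the final differential $dL_n(x)$.

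First I would handle the case $dL_n\neq dT$. Here one uses the fundamental property of the It\^{o} integral (classical, or quantum in the sense of \cite{Part} for $\sigma=1$ and \cite{HuLi} for $\sigma>1$) that the expectation of a stochastic integral against a martingale-type integrator vanishes: $\mathbb{E}\bigl[\int_a^b F(x)\,dL_n(x)\bigr]=0$ whenever $F$ is adapted and stochastically integrable and $dL_n$ is one of $dX,dY,dP^{(\sigma)},dQ^{(\sigma)}$ (or any of their linear recombinations $dZ,d\bar Z,dA^{(\sigma)}$, etc., since these are still centred increments with the martingale property). This disposes of every case in which the last letter is not $dT$, regardless of what $dL_1,\dots,dL_{n-1}$ are. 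Second, if $dL_n=dT$, then
\begin{equation*}
\mathbb{E}\left[J_a^b\{dL_1\otimes\cdots\otimes dL_{n-1}\otimes dT\}\right]=\int_a^b \mathbb{E}\left[J_a^{x}\{dL_1\otimes\cdots\otimes dL_{n-1}\}\right]dx,
\end{equation*}
using the fact that $dT(x)=dx$ is deterministic so the expectation passes inside. By the induction hypothesis applied at each upper limit $x\in(a,b)$, the inner expectation vanishes unless $dL_1=\cdots=dL_{n-1}=dT$, in which case the whole expression is $\int_a^b \tfrac{(x-a)^{n-1}}{(n-1)!}\,dx=\tfrac{(b-a)^n}{n!}\neq 0$. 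Thus $\mathbb{E}[J_a^b\{dL_1\otimes\cdots\otimes dL_n\}]=0$ unless all letters are $dT$, completing the induction.

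The main obstacle is making the "expectation of an It\^{o} integral against a centred integrator is zero" step rigorous and uniform across all the tables in play, particularly in the non-Fock quantum case $\sigma>1$. One must check that $J_a^x\{dL_1\otimes\cdots\otimes dL_{n-1}\}$ is genuinely in the domain of the quantum stochastic integral and that the relevant vacuum (or finite-temperature) state annihilates the martingale part; for the quantum calculus this is the statement that $\mathbb{E}[\int F\,dA + \int G\,dA^\dagger + \int H\,dT]=\mathbb{E}[\int H\,dT]$, the first fundamental formula of \cite{Part,HuLi}. A secondary subtlety is that the recombined differentials such as $dZ$, $dA^{(\sigma)}$, $d\hat A^{(\sigma)}$ are complex linear combinations of the basic ones; by bilinearity of $J_a^b$ and linearity of expectation, it suffices to verify the claim on any one basis, and the cleanest choice is the creation/annihilation/time basis of (\ref{mod'}) or (\ref{hat'}), where the martingale-part property is built into the definition of the integral. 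Once that is in place, the induction above runs without further difficulty; no appeal to the sticky shuffle product itself is needed for this particular theorem, only the vanishing-of-martingale-increments fact and the deterministic nature of $dT$.
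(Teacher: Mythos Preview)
Your proposal is correct and follows essentially the same route as the paper: split off the outermost integration, use the martingale property (classically) or the first fundamental formula of quantum stochastic calculus (in the Fock and non-Fock cases) to kill the expectation when $dL_n\neq dT$, and otherwise pass the expectation inside the time integral and iterate. The only cosmetic difference is that you phrase the argument as a formal induction on $n$, whereas the paper simply says ``by repetition''; the paper also spells out the $\sigma>1$ case a bit more concretely by realizing $P^{(\sigma)},Q^{(\sigma)}$ in $\mathcal{F}\otimes\bar{\mathcal{F}}$ before invoking the first fundamental formula, which is exactly the content of your citation of \cite{HuLi}.
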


\begin{proof}
If $dL_{n}\neq dT$ then%
\begin{equation}
J_{a}^{b}\left\{ dL_{1}\otimes dL_{2}\otimes ...\otimes dL_{n}\right\}
\,=\,\int_{a}^{b}J_{a}^{x}\left\{ dL_{1}\otimes dL_{2}\otimes ...\otimes
dL_{n-1}\right\} dL_{n}(x).  \label{sto}
\end{equation}%
In the classical cases (\ref{table'}) and (\ref{mod}), $L_{n}$ is a real or complex-valued
martingale and the expectation of the stochastic integral against $dL_{n}$ vanishes.
When $\sigma =1$ it also vanishes in the cases (\ref{table''}), (\ref{mod'}) and
(\ref{mod''}) by the first
fundamental formula of quantum stochastic calculus in the Fock space $%
\mathcal{F}$ \cite{Part}. When $\sigma >1$ it vanishes as may be seen for
example by realising the processes $P^{\left( \sigma \right) }$ and $%
Q^{(\sigma )}$ in the tensor product of $\mathcal{F}$ with its Hilbert space
dual, $\mathcal{F\otimes \bar{F}}$ , equipped with the tensor product $%
e\left( 0\right) \otimes \overline{e\left( 0\right) }$ of the Fock vacuum vector with its dual
vector as
\begin{eqnarray}
P^{\left( \sigma \right) } &\,=\,&%
%TCIMACRO{\U{bd}}%
%
%EndExpansion
\sqrt{{\frac12}\left( \sigma ^{2}+1\right)} \left( P^{\left( \sigma\right) }\otimes \bar{I}%
\right) +%
%TCIMACRO{\U{bd}}%
%BeginExpansion
%
%EndExpansion
\sqrt{{\frac12}\left( \sigma ^{2}-1\right)} \left( I\otimes \bar{P}^{(\sigma )}\right) \\
Q^{\left( \sigma \right) } &\,=\,&%
%TCIMACRO{\U{bd}}%
%BeginExpansion%
%EndExpansion
\sqrt{{\frac12}\left( \sigma ^{2}+1\right)} \left( Q^{\left( \sigma\right) }\otimes \bar{I}%
\right) +%
%TCIMACRO{\U{bd}}%
%BeginExpansion
%
%EndExpansion
\sqrt{{\frac12}\left( \sigma ^{2}-1\right)} \left( I\otimes \bar{Q}^{(\sigma )}\right)
\end{eqnarray}%
and again invoking the first fundamental formula. Thus in all cases%
\begin{equation}
\mathbb{E}\left[ J_{a}^{b}\left\{ dL_{1}\otimes dL_{2}\otimes ...\otimes
dL_{n}\right\} \right] =0
\end{equation}%
unless $dL_{n}=dT.$

If $dL_{n}=dT$ then by Fubini's theorem we can write%
\begin{eqnarray}
&&\!\!\!\!\!\!\!\!\mathbb{E}\left[ J_{a}^{b}\left\{ dL_{1}\otimes dL_{2}\otimes ...\otimes
dL_{n}\right\} \right]  \notag\\
&=\,&\int_{a}^{b}\left\{ \mathbb{E}\left[ J_{a}^{x}\left\{ dL_{1}\otimes
dL_{2}\otimes ...\otimes dL_{n-1}\right\} \right] \right\} dx \notag \\
&=\,&0,
\end{eqnarray}%
unless $dL_{n-1}=dT$, by the previous argument. By repetition we see that
\begin{equation}
\mathbb{E}\left[ J_{a}^{b}\left\{ dL_{1}\otimes dL_{2}\otimes ...\otimes dL_{n}\right\}
\right] \,=\,0,
\end{equation}
unless each of $dL_{n},$ $dL_{n-1}$, $dL_{n-2}$, \dots, $dL_{1}$ is equal to
$dT$.\end{proof}

\bigskip

Now consider the moments sequence of the normalized quantum L\'{e}vy area of
variance $\sigma ^{2}$ in the form (\ref{norm}). In view of Theorem 3
\begin{eqnarray}
\left[ \mathcal{\hat{B}}_{[a,b)}^{\left( \sigma \right) }\right] ^{n}
&=\,&i^{n}\left( J_{a}^{b}\left( d\hat{A}^{\left( \sigma \right) }\otimes d%
\hat{A}^{\dagger \left( \sigma \right) }-d\hat{A}^{\dagger \left( \sigma
\right) }\otimes d\hat{A}^{\left( \sigma \right) }\right) \right) ^{n} \notag \\
&=\,&i^{n}J_{a}^{b}\left( \left\{ d\hat{A}^{\left( \sigma \right) }\otimes d%
\hat{A}^{\dagger \left( \sigma \right) }-d\hat{A}^{\dagger \left( \sigma
\right) }\otimes d\hat{A}^{\left( \sigma \right) }\right\} ^{n}\right)
\end{eqnarray}

The $n$th sticky shuffle power $\left\{ d\hat{A}^{\left( \sigma \right) }\otimes
d\hat{A}^{\dagger \left( \sigma \right) }-d\hat{A}^{\dagger \left( \sigma \right) }\otimes
d\hat{A}^{\left( \sigma \right) }\right\} ^{n}$ will consist of non-sticky shuffle products of
rank $2n$ together with terms of lower ranks $n,n+1,...,2n-1$,$~$all of which except
the rank $n$ term will
contain one or more copies of $d\hat{A}^{\left( \sigma \right) }$ and $d\hat{%
A}^{\dagger \left( \sigma \right) },$ and will thus not contribute to the expectation in view
of Theorem\,\ref{sz.LT}. The term of rank $n$ will be a multiple
of $dT\otimes dT\cdots \otimes \overset{(n)}{dT}.$ Thus we can write%
\begin{eqnarray}
&&\!\!\!\!\!\!\!\!\left\{ d\hat{A}^{\left( \sigma \right) }\otimes d\hat{A}^{\dagger \left(
\sigma \right) }-d\hat{A}^{\dagger \left( \sigma \right) }\otimes d\hat{A}%
^{\left( \sigma \right) }\right\} ^{n}  \notag \\
&=\,&w_{n}^{\left( \sigma \right) }\bigl\{ dT\otimes dT\cdots \otimes \overset{%
(n)}{dT}\bigr\} +\,\text{terms of rank}>n.  \label{rank}
\end{eqnarray}%
for some coefficient $w_{n}^{\left( \sigma \right) }.$ The corresponding moment is given
by
\begin{eqnarray}
\mathbb{E}\left[ \mathcal{\hat{B}}_{[a,b)}^{\left( \sigma \right) }\right]
^{n} &=\,&i^{n}w_{n}^{\left( \sigma \right) }\mathbb{E}\left[ J_{a}^{b}\left(
\left\{ dT\otimes dT\cdots \otimes \overset{(n)}{dT}\right\} \right) \right]
\notag \\
&=\,&i^{n}w_{n}^{\left( \sigma \right) }\int_{a\leq x_{1}<x_{2}<\cdots
<x_{n}<b}dx_{1}dx_{2}...dx_{n}  \notag \\
&=\,&i^{n}w_{n}^{\left( \sigma \right) }\frac{\left( b-a\right) ^{n}}{n!}.
\label{moment}
\end{eqnarray}

By the recovery formula (\ref{recovery}) and the multiplicativity of the $n$%
th order coproduct $\Delta ^{\left( n\right) },$%
\begin{eqnarray}
&&\!\!\!\!\!\!\!\!w_{n}^{\left( \sigma \right) }dT\otimes dT\cdots \otimes \overset{(n)}{dT} \notag\\
&=\,&\left\{\left\{ d\hat{A}^{\left( \sigma \right) }\otimes d\hat{A}^{\dagger
\left( \sigma \right) }-d\hat{A}^{\dagger \left( \sigma \right) }\otimes d%
\hat{A}^{\left( \sigma \right) }\right\} ^{n}\right\}_n \notag\\
&=\,&\left( \Delta ^{(n)}\left( \left\{ d\hat{A}^{\left( \sigma \right)
}\otimes d\hat{A}^{\dagger \left( \sigma \right) }-d\hat{A}^{\dagger \left(
\sigma \right) }\otimes d\hat{A}^{\left( \sigma \right) }\right\}
_{{}}^{n}\right) \right) _{\!\!(1,1,...,\overset{\left( n\right) }{1})} \\
&=\,&\left( \left( \Delta ^{(n)}\left( \left\{ d\hat{A}^{\left( \sigma \right)
}\otimes d\hat{A}^{\dagger \left( \sigma \right) }-d\hat{A}^{\dagger \left(
\sigma \right) }\otimes d\hat{A}^{\left( \sigma \right) }\right\} \right)
\right) ^{n}\right) _{\!\!(1,1,...,\overset{\left( n\right) }{1})}.\notag
\end{eqnarray}%
Now%
\begin{eqnarray}
&&\!\!\!\!\!\!\!\Delta ^{(n)}\left( \left\{ d\hat{A}^{\left( \sigma \right) }\otimes d\hat{%
A}^{\dagger \left( \sigma \right) }-d\hat{A}^{\dagger \left( \sigma \right)
}\otimes d\hat{A}^{\left( \sigma \right) }\right\} \right)  \notag\\
&=&\sum_{1\leq j\leq n}1_{\mathcal{T}\left( \mathcal{L}\right) }\otimes\cdots \otimes\overset{(j)}{\left\{ d\hat{A}^{\left( \sigma \right) }\otimes d\hat{A}^{\dagger \left(
\sigma \right) }-d\hat{A}^{\dagger \left( \sigma \right) }\otimes d\hat{A}%
^{\left( \sigma \right) }\right\}}%\otimes 1_{\mathcal{T}\left( \mathcal{L}\right) }
\otimes\cdots \otimes 1_{\mathcal{T}\left( \mathcal{L}\right) }\notag\\
&&+\sum\limits_{1\leq j\,<k\leq n}\biggl( 1_{\mathcal{T}\left( \mathcal{L}%
\right) }\otimes \cdots \otimes \overset{\left( j\right) }{\left\{ d\hat{A}%
^{\left( \sigma \right) }\right\} }\otimes \cdots \otimes \overset{\left(
k\right) }{\left\{ d\hat{A}^{\dagger \left( \sigma \right) }\right\} }%
\otimes \cdots \otimes \overset{\left( n\right) }{1_{\mathcal{T}\left(
\mathcal{L}\right) }} \notag \\
&& -1_{\mathcal{T}\left( \mathcal{L}\right) }\otimes \cdots \otimes
\overset{\left( j\right) }{\left\{ d\hat{A}^{\dagger \left( \sigma \right)
}\right\} }\otimes \cdots \otimes \overset{\left( k\right) }{\left\{ d\hat{A}%
^{\left( \sigma \right) }\right\} }\otimes \cdots \otimes \overset{\left(
n\right) }{1_{\mathcal{T}\left( \mathcal{L}\right) }}\biggr).
\end{eqnarray}%
The first term of this sum, being of rank $2,$ cannot contribute to the component of joint rank $(1,1,...,%-
\overset{\left( n\right) }{1})$ of the $n$th power of $\Delta ^{(n)}\left( \left\{ dX\otimes
dY-dY\otimes dX\right\} \right)$, where product in the n\textit{th} tensor power
$\bigotimes\!^{(N)}\mathcal{T}(\mathcal{L})$ is defined exactly as in the case $n=2$ in %-
\eqref{eq.power}. Thus
\begin{eqnarray}\label{eq.111}
&&\!\!\!\!\!\!\!\!w_{n}^{\left( \sigma \right) }dT\otimes dT\cdots \otimes \overset{(n)}{dT} \notag\\
&=\,&\left( \left( \Delta ^{(n)}\left( \left\{ d\hat{A}^{\left( \sigma \right)
}\otimes d\hat{A}^{\dagger \left( \sigma \right) }-d\hat{A}^{\dagger \left(
\sigma \right) }\otimes d\hat{A}^{\left( \sigma \right) }\right\} \right)
\right) ^{\!n\,}\right) _{\!\!(1,1,...,\overset{\left( n\right) }{1})} \\
&=\,&\left( \left( \sum\limits_{1\leq j\,<k\leq n}\biggl( 1_{\mathcal{T}\left(
\mathcal{L}\right) }\otimes \cdots \otimes \overset{\left( j\right) }{%
\left\{ d\hat{A}^{\left( \sigma \right) }\right\} }\otimes \cdots \otimes
\overset{\left( k\right) }{\left\{ d\hat{A}^{\dagger \left( \sigma \right)
}\right\} }\otimes \cdots \otimes \overset{\left( n\right) }{1_{\mathcal{T}%
\left( \mathcal{L}\right) }}\biggr. \right. \right. \notag \\
&&\left. \left. \biggl. -1_{\mathcal{T}\left( \mathcal{L}\right) }\otimes
\cdots \otimes \overset{\left( j\right) }{\left\{ d\hat{A}^{\dagger \left(
\sigma \right) }\right\} }\otimes \cdots \otimes \overset{\left( k\right) }{%
\left\{ d\hat{A}^{\left( \sigma \right) }\right\} }\otimes \cdots \otimes
\overset{\left( n\right) }{1_{\mathcal{T}\left( \mathcal{L}\right) }}\biggr)
\right)^{\!\!\!n\,}\right) _{\!\!\!(1,1,...,\overset{\left( n\right) }{1})}\notag
\end{eqnarray}%
This calculation of $w_{n}^{\left( \sigma \right) }$ can be finished using some
combinatorics. We do that in the following two sections.

\section{Some background about Eulerian and Euler numbers.}

In this section, we present some simple lemmas about Euler numbers, Eulerian
numbers and Euler polynomials. They are of sufficient general nature to be of
potential interest elsewhere. Many similar results and basics %and some generalizations
can be found in \cite{Pete} and \cite{Stan}.

A permutation $\mathfrak{s}$ in the symmetric group $\cS_{n}$ is a \emph{zigzag
permutation} (misleadingly also called alternating permutation)
if $\mathfrak{s}(1)>\mathfrak{s}(2)<\mathfrak{s}(3)>\mathfrak{s}(4)<\dotsb $%
. In other words, $\mathfrak{s}$ is zigzag if $\mathfrak{s}(1)>\mathfrak{s}(2)$ and
\begin{equation}
\text{either}\quad\mathfrak{s}(j-1)<\mathfrak{s}(j)>\mathfrak{s}(j+1)\quad\text{or}\quad\mathfrak{s}(j-1)>\mathfrak{s}(j)<\mathfrak{s}(j+1)  \label{1}
\end{equation}%
for all $j\in \{2,3\dotsc ,n-1\}$.
If we have the initial condition $\mathfrak{s}(1)<\mathfrak{s}(2)$, instead of $\mathfrak{s}(1)>%
\mathfrak{s}(2)$, we may call $\mathfrak{s}$ \emph{zagzig}. The number of all zigzag
permutations in $\cS_{n}$ is called the \emph{Euler zigzag number} $A_{n}$. These
numbers occur in many places, for example, as the coefficients of $\frac{z^{2n}}{(2n)!}$
in the Maclaurin series of $\sec
(z)+\tan (z)$. In this paper, we will meet them as the number of \emph{%
forth-back permutations}, as we call them. These are the permutations $%
\mathfrak{s}\in \cS_{n}$ with
\begin{equation}
\text{either}\quad\mathfrak{s}^{-1}(j)<j>\mathfrak{s}(j)\quad\text{or}\quad\mathfrak{s}^{-1}(j)>j<\mathfrak{s}(j)  \label{2}
\end{equation}%
for all $j\in \{1,2,\dotsc ,n\}$. Since no forth-back permutation can contain a cycle of odd
length, $n$ must be even for there to exist forth-back permutations, say $n=2m>0$. In
that case, we have the following lemma:

\begin{lemma}
\label{lem.fb} The number of forth-back permutations in $\cS_{2m}$ is the Euler zigzag
number $A_{2m}$.
\end{lemma}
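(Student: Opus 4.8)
The plan is to construct an explicit bijection between the forth-back permutations of $\cS_{2m}$ and the zigzag permutations of $\cS_{2m}$, which number $A_{2m}$ by definition; the bijection is a variant of Foata's fundamental correspondence. As already noted, in a forth-back permutation $\mathfrak{s}$ every $j$ is either a cyclic peak, $\mathfrak{s}^{-1}(j)<j>\mathfrak{s}(j)$, or a cyclic valley, $\mathfrak{s}^{-1}(j)>j<\mathfrak{s}(j)$; hence peaks and valleys alternate around each cycle and every cycle has even length. Given such an $\mathfrak{s}$, write each cycle starting from its largest element, arrange the cycles from left to right in order of increasing largest element, and erase all parentheses; the resulting word $\Phi(\mathfrak{s})$ is a one-line permutation in $\cS_{2m}$.

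First I would check that $\Phi(\mathfrak{s})$ is always zigzag. Consider one cycle, written $(c,\mathfrak{s}(c),\mathfrak{s}^{2}(c),\dots)$ with $c$ its maximum. Since $c$ is a cyclic peak the first step $c>\mathfrak{s}(c)$ is a descent; the forth-back condition then forces each subsequent element to be a valley or a peak according as the previous step was a descent or an ascent, so the block alternates descent, ascent, descent, $\dots$, and as the cycle has even length it is a down-up word that both begins and ends with a descent. At the junction between two consecutive blocks, the last letter $v$ of the earlier block is not its maximum, so $v<c<c'$ where $c<c'$ are the two block maxima; thus the junction step is an ascent. Concatenating, $\Phi(\mathfrak{s})$ alternates descent, ascent, descent, $\dots$, i.e.\ it is zigzag.

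For the inverse I would argue that in a zigzag $w\in\cS_{2m}$ every even position is a valley, hence not a left-to-right maximum, so all left-to-right maxima of $w$ occur at odd positions (position $1$ always being one). Cutting $w$ just before each left-to-right maximum therefore decomposes it into consecutive blocks, each of even length (the distance between two odd positions), each starting with its own maximum, with the block-maxima increasing left to right. Reading each block as a cycle yields a permutation $\Psi(w)\in\cS_{2m}$; reversing the analysis above shows $\Psi(w)$ is forth-back and that $\Phi$ and $\Psi$ are mutually inverse, which proves the lemma.

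The main obstacle will be the careful verification of the two middle claims: that the forth-back condition is, cycle by cycle, exactly equivalent to the corresponding block being a down-up word headed by the cycle-maximum, and conversely that the left-to-right maxima of a zigzag permutation sit at odd positions so that cutting there produces blocks of even length with increasing leaders. This parity fact is precisely what reconciles the even cycle lengths of forth-back permutations with the alternating pattern of zigzag permutations; once it is established, the mutual inverse property is immediate.
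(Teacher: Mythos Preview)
Your proposal is correct and is essentially the paper's own argument: both use Foata's \emph{transformation fondamentale}, writing each cycle starting at its maximum, ordering the cycles by increasing maxima, and erasing parentheses, with the inverse given by cutting a zigzag word before its left-to-right maxima. Your write-up in fact supplies more detail than the paper on why the image is zigzag and why the left-to-right maxima sit at odd positions, points the paper disposes of with ``obviously''.
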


\begin{proof}
A bijection between the forth-back permutations $\mathfrak{s}$ and the zigzag
permutations in $\cS_{2m}$ is obtained by applying the so-called \emph{transformation
fundamentale} \cite{FoSc}. To perform this transformation, we write $\mathfrak{s}$ in
cycle notation
\begin{eqnarray}
\mathfrak{s} &\,=\,&(\mathfrak{s}_{1},\mathfrak{s}_{2},\dotsc ,\mathfrak{s}%
_{\ell _{2}-1})(\mathfrak{s}_{\ell _{2}},\mathfrak{s}_{\ell _{2}+1},\dotsc ,%
\mathfrak{s}_{\ell _{3}-1})(\mathfrak{s}_{\ell _{3}},\mathfrak{s}_{\ell
_{3}+1},\dotsc ,\mathfrak{s}_{\ell _{4}-1})\dotsm  \notag \\
&&(\mathfrak{s}_{\ell _{m}},\mathfrak{s}_{\ell _{m}+1},\dotsc ,\mathfrak{s}%
_{2m})\,.  \label{3}
\end{eqnarray}%
This representation and the numbers \ $\mathfrak{s}_{j}$ are uniquely determined if we
require that the first entry of every cycle is bigger than
all other entries in that cycle, and also that $\mathfrak{s}_{1}<\mathfrak{s}%
_{\ell _{2}}<\mathfrak{s}_{\ell _{3}}<\dotsb <\mathfrak{s}_{\ell _{m}}$. The new
permutation $\mathfrak{\bar{s}}$ is then obtained by forgetting brackets and setting
$\mathfrak{\bar{s}}(j):=\mathfrak{s}_{j}$. We just have to see that this actually yields a
bijection $\mathfrak{s}\mapsto \mathfrak{\bar{s}} $ between forth-back and zigzag
permutations. To do this we procede as follows.

Assume first that $\mathfrak{s}$ is forth-back. Then all cycles necessarily have even
length and the permutation $\mathfrak{\bar{s}}$ is obviously
zigzag, $\mathfrak{s}_{1}>\mathfrak{s}_{2}<\mathfrak{s}_{3}>\mathfrak{s}%
_{4}<\dotsb >\mathfrak{s}_{2m}$. Conversely, let us show that every zigzag
permutation $\bar{\mathfrak{s}}$ has a unique pre-image $\mathfrak{s}$, and
that that pre-image is forth-back.  To construct a pre-image $\mathfrak{s}$ of $%
\bar{\mathfrak{s}}$, we only need to find suitable numbers $\ell _{j}$,
which indicate where we have to insert brackets into the sequence $(%
\mathfrak{s}_{1},\mathfrak{s}_{2},\dotsc ,\mathfrak{s}_{2m}):=(\mathfrak{s}%
(1),\mathfrak{s}(2),\dotsc ,\mathfrak{s}(2m))$ to actually get a pre-image. However, if
we have already found $\ell _{2},\ell _{3},\dotsc ,\ell _{j}$,
then $\ell _{j+1}$ is necessarily the first index $x$ with $\mathfrak{s}_{x}>%
\mathfrak{s}_{\ell _{j}}$. Using this, we can construct a pre-image $%
\mathfrak{s}$ of $\bar{\mathfrak{s}}$ in $\cS_{2m}$, and it is uniquely determined.
Moreover, if $\bar{\mathfrak{s}}$ is zigzag then this construction ensures that
$\mathfrak{s}_{1}$ and the $\mathfrak{s}_{\ell _{j}}$ are peaks and their neighbors and
$\mathfrak{s}_{2m}$ are valleys.
Since also $\mathfrak{s}_{1}>\mathfrak{s}_{\ell _{2}-1}$, $\mathfrak{s}%
_{\ell _{2}}>\mathfrak{s}_{\ell _{3}-1}$, \dots , $\mathfrak{s}_{\ell _{m}}>%
\mathfrak{s}_{2m}$, insertion of brackets before the peaks $\ell _{j}$ yields forth-back
cycles in $\mathfrak{s}$.

With the bijection established, it is now clear that there are as many
forth-back permutations as there are zigzag permutations in $\cS%
_{2m} $. This number is the Euler zigzag number $A_{2m}$.
\end{proof}

The number of forth-back permutations with just one cycle is given by the following
lemma. If $\cC_n$ denotes the subset of cyclic permutations in $\cS_n$, we have:

\begin{lemma}
\label{lem.cfb} The number of forth-back permutations in $\cC_{2m}$ is $A_{2m-1}$.
\end{lemma}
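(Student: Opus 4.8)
The plan is to restrict the bijection from the proof of Lemma~\ref{lem.fb} to those forth-back permutations that consist of a single $2m$-cycle. Recall that the \emph{transformation fundamentale} sends a forth-back permutation $\mathfrak{s}\in\cS_{2m}$, written in cycle notation with every cycle headed by its largest entry and with the cycle-heads in increasing order, to the zigzag permutation $\bar{\mathfrak{s}}$ obtained by erasing all brackets. First I would show that $\mathfrak{s}\in\cC_{2m}$ exactly when $\bar{\mathfrak{s}}(1)=2m$. Indeed, if $\mathfrak{s}$ is a single $2m$-cycle then its unique cycle is headed by the global maximum, so $\bar{\mathfrak{s}}(1)=2m$; conversely, in the reconstruction described in the proof of Lemma~\ref{lem.fb} the second cycle-head $\ell_{2}$ is the first index $x$ with $\mathfrak{s}_{x}>\mathfrak{s}_{1}$, and such an index fails to exist precisely when $\mathfrak{s}_{1}=\bar{\mathfrak{s}}(1)=2m$, in which case the reconstructed permutation is the single cycle $(\bar{\mathfrak{s}}(1),\dots,\bar{\mathfrak{s}}(2m))$. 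Hence $\#\{\text{forth-back }\mathfrak{s}\in\cC_{2m}\}$ equals the number of zigzag permutations $\bar{\mathfrak{s}}\in\cS_{2m}$ with $\bar{\mathfrak{s}}(1)=2m$.

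The second step is to delete this leading maximum: removing the value $2m$ from position $1$ and relabelling $\bar{\mathfrak{s}}(2),\dots,\bar{\mathfrak{s}}(2m)$ by the unique increasing bijection onto $\{1,\dots,2m-1\}$ produces a permutation in $\cS_{2m-1}$, and since the zigzag condition $\bar{\mathfrak{s}}(1)>\bar{\mathfrak{s}}(2)<\bar{\mathfrak{s}}(3)>\cdots$ leaves the relations $\bar{\mathfrak{s}}(2)<\bar{\mathfrak{s}}(3)>\bar{\mathfrak{s}}(4)<\cdots$ among the remaining positions, the result is a zagzig permutation of $\cS_{2m-1}$ (here one uses that $2m-1$ is odd, so the surviving pattern really does begin with ``$<$''). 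This map is a bijection, its inverse simply prepending the value $2m$. The third step is the complementation $j\mapsto 2m-j$, an order-reversing involution of $\{1,\dots,2m-1\}$ that interchanges zagzig and zigzag permutations of $\cS_{2m-1}$; thus the number of zagzig permutations of $\cS_{2m-1}$ is the Euler zigzag number $A_{2m-1}$. Composing the three bijections yields the claim.

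The one step that needs genuine care is the first: identifying exactly which zigzag permutations come from single-cycle forth-back permutations; the remaining manipulations are routine once the alternation pattern and the parity of $2m-1$ are tracked. As a sanity check, for $m=1$ the only member of $\cC_{2}$ is the transposition $(1\,2)$, which is forth-back, and $A_{1}=1$; for $m=2$ the forth-back members of $\cC_{4}$ are the two $4$-cycles $(4\,1\,3\,2)$ and $(4\,2\,3\,1)$, matching $A_{3}=2$.
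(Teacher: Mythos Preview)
Your argument is correct. The paper proceeds more directly: it writes the single cycle with $2m$ in the \emph{last} position, $(\mathfrak{s}_1,\dots,\mathfrak{s}_{2m-1},2m)$, and simply drops that last entry to obtain a zagzig permutation of $\{1,\dots,2m-1\}$, whose inverse map is $\bar{\mathfrak{s}}\mapsto(\bar{\mathfrak{s}}(1),\dots,\bar{\mathfrak{s}}(2m-1),2m)$. Your route via Lemma~\ref{lem.fb} places $2m$ first rather than last and then deletes it; since these two normalisations differ only by a cyclic shift of one step, the resulting zagzig permutation is literally the same, so the two bijections coincide. What you gain by factoring through Lemma~\ref{lem.fb} is a clean characterisation of the single-cycle case inside the already-established bijection; what the paper gains is brevity, since it avoids the detour and the separate complementation step (it just asserts that zagzig and zigzag permutations of $\cS_{2m-1}$ are equinumerous). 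One small redundancy: your ``relabelling by the unique increasing bijection'' is unnecessary, since after removing $2m$ from position~$1$ the remaining values $\bar{\mathfrak{s}}(2),\dots,\bar{\mathfrak{s}}(2m)$ already form a permutation of $\{1,\dots,2m-1\}$.
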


\begin{proof}
The cycle notation $\mathfrak{s}=(\mathfrak{s}_{1},\mathfrak{s}_{2},\dotsc ,%
\mathfrak{s}_{2m})$ of cyclic permutations $\mathfrak{s}\in \cS_{2m}$ is not uniquely
determined, as one may rotate the entries cyclically. It becomes uniquely determined if
we require that $\mathfrak{s}_{2m}=2m$. In
this case, removal of the last entry yields a sequence $(\mathfrak{s}_{1},%
\mathfrak{s}_{2},\dotsc ,\mathfrak{s}_{2m-1})$ that is zagzig (with $%
\mathfrak{s}_{1}<\mathfrak{s}_{2}$ as $\mathfrak{s}_{2m}$ was the biggest
entry of $\mathfrak{s}$). If we define $\bar{\mathfrak{s}}\in \cS%
_{2m-1}$ by setting $\bar{\mathfrak{s}}(j):=\mathfrak{s}_{j}$, for $%
j=1,2,\dotsc ,2m-1$, we obtain a bijection $\mathfrak{s}\mapsto \bar{%
\mathfrak{s}}$ from the cyclic forth-back permutations in $\cS_{2m}$ to the zagzig
permutations in $\cS_{2m-1}$. Indeed, every zagzig
permutation $\bar{\mathfrak{s}}$ in $\cS_{2m-1}$ has the cycle $%
\mathfrak{s}:=(\bar{\mathfrak{s}}(1),\bar{\mathfrak{s}}(2),\dotsc ,\bar{%
\mathfrak{s}}(2m-1),2m)$ as unique pre-image. The existence of this
bijection shows that the number of cyclic forth-back permutations in $%
\cS_{2m}$ is equal to the number of zagzig permutations in $\mathcal{%
S}_{2m-1}$, which is $A_{2m-1}$, as for zigzag permutations.
\end{proof}

This enumerative result about cyclic forth-back permutations can also be applied to
forth-back permutation with $k$ cycles of lengths $2m_{1},2m_{2},\dotsc ,2m_{k}$
(necessarily all even). To formulate this, we denote with $\cC_{n_{1},n_{2},\dotsc
,n_{k}}$ the set of all permutations in $\cS_{n}$ with $k$ cycles of lengths
$n_{1},n_{2},\dotsc ,n_{k}$, i.e.\ the set of permutations of \emph{typ}
$(n_1,n_2,\dotsc,n_k)$, as we say. We also denote with $\bbinom{n}{n_{1},n_{2},\dotsc
,n_{k}}$ the number of unordered partitions $\{N_1,N_2,\dotsc,N_k\}$ of the set
$\{1,2,\dotsc,n\}$ with $k$ blocks $N_j$ of sizes $|N_j|=n_j>0$. With this we get the
following more general formula:

\begin{lemma}
\label{lem.fbd} If positive integers $m_{1}\leq m_{2}\leq \dotsb \leq m_{k}$ with
$m_{1}+m_{2}+\dotsc +m_{k}=m$ are given, then the number of forth-back
permutations in $\cS_{2m}$ with $k$ cycles of lengths $2m_{1},2m_{2},\dotsc ,2m_{k}$
is
\begin{equation*}
%\bigl|\cC_{2m_{1},2m_{2},\dotsc ,2m_{k}}\bigr|\,=\,
\bigl|\{\cs\in\cC_{2m_{1},2m_{2},\dotsc ,2m_{k}}\,|\,\text{$\cs$ is forth-back}\}\bigr|\,=\,
\bbinom{2m}{2m_{1},2m_{2},\dotsc ,2m_{k}}\prod_{j=1}^{k}A_{2m_{j}-1}\,.
\end{equation*}%
In particular,%
\begin{equation*}
A_{2m}\,=\,\sum \bbinom{2m}{2m_{1},2m_{2},\dotsc ,2m_{k}}%
\prod_{j=1}^{k}A_{2m_{j}-1}\,,
\end{equation*}%
where the sum runs over all partitions $m_{1}+m_{2}+\dotsb +m_{k}$ of $m$, that is
over all non-decreasing sequences $m_{1}\leq m_{2}\leq \dotsb \leq
m_{k}$ of positive integers of every possible length $k$ with $%
m_{1}+m_{2}+\dotsb +m_{k}=m>0$.
\end{lemma}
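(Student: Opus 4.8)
The plan is to decompose a forth-back permutation in $\cS_{2m}$ according to the partition of the underlying set $\{1,2,\dotsc,2m\}$ into the supports of its cycles, and then to count cyclic forth-back permutations on each block using Lemma \ref{lem.cfb}. First I would observe that the forth-back condition \eqref{2} is a purely \emph{local} condition at each index $j$: it only compares $j$ with its image $\mathfrak{s}(j)$ and its preimage $\mathfrak{s}^{-1}(j)$, both of which lie in the same cycle of $\mathfrak{s}$ as $j$. Hence a permutation $\mathfrak{s}$ with cycle supports $N_1,N_2,\dotsc,N_k$ (a set partition of $\{1,2,\dotsc,2m\}$) is forth-back if and only if each cyclic restriction $\mathfrak{s}|_{N_j}$, viewed as a cyclic permutation of the set $N_j$, is itself forth-back with respect to the natural order inherited from $\{1,2,\dotsc,2m\}$. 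This is the key structural remark; it reduces the global count to a product of local counts, once we sum over all ways of choosing the supports.

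Next I would note that ``forth-back'' is an order-theoretic property invariant under order-isomorphism: if $N_j$ has size $2m_j$ and $\phi_j\colon N_j\to\{1,2,\dotsc,2m_j\}$ is the unique increasing bijection, then $\mathfrak{s}|_{N_j}$ is a forth-back cyclic permutation of $N_j$ precisely when $\phi_j\circ(\mathfrak{s}|_{N_j})\circ\phi_j^{-1}$ is a forth-back cyclic permutation in $\cC_{2m_j}$. (This in particular forces each $|N_j|$ to be even, as already noted in the text, since a forth-back permutation has no odd cycle.) By Lemma \ref{lem.cfb}, the number of forth-back permutations in $\cC_{2m_j}$ is $A_{2m_j-1}$. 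Therefore, for a \emph{fixed} set partition $\{N_1,\dotsc,N_k\}$ with $|N_j|=2m_j$, the number of forth-back $\mathfrak{s}\in\cS_{2m}$ whose cycle supports are exactly the $N_j$ is $\prod_{j=1}^{k}A_{2m_j-1}$. Summing over all set partitions of $\{1,2,\dotsc,2m\}$ into blocks of the prescribed sizes $2m_1,2m_2,\dotsc,2m_k$ — of which there are $\bbinom{2m}{2m_1,2m_2,\dotsc,2m_k}$ by definition — yields the first displayed formula. The particular case then follows by Lemma \ref{lem.fb}, which identifies $\sum|\{\cs\in\cC_{2m_1,\dotsc,2m_k}:\cs\text{ forth-back}\}|$, summed over all partitions $m_1+\dotsb+m_k=m$, with the total number $A_{2m}$ of forth-back permutations in $\cS_{2m}$.

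The only point requiring a little care — and the one I would write out most carefully — is the bookkeeping when several of the $m_j$ coincide. Because Lemma \ref{lem.cfb} is applied to each block after transporting it to the standard set $\{1,2,\dotsc,2m_j\}$, and because the multinomial-type symbol $\bbinom{2m}{2m_1,\dotsc,2m_k}$ is defined in the text as counting \emph{unordered} partitions $\{N_1,\dotsc,N_k\}$, the repeated-size factors $1/(\text{multiplicities})!$ are already absorbed into that symbol; so no extra symmetry factor is needed and the product $\prod_j A_{2m_j-1}$ attaches cleanly to each unordered partition. I expect this is the main (and only) obstacle, and it is essentially a matter of matching conventions rather than a substantive difficulty. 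The rest is the routine verification that the forth-back condition genuinely splits cycle-by-cycle, which is immediate from \eqref{2}.
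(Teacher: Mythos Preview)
Your proposal is correct and follows essentially the same approach as the paper: partition $\{1,\dotsc,2m\}$ into blocks of the prescribed sizes, apply Lemma~\ref{lem.cfb} to count cyclic forth-back permutations on each block, multiply, and then invoke Lemma~\ref{lem.fb} for the summed identity. Your write-up is in fact more explicit than the paper's about why the forth-back condition factors cycle-by-cycle and about the unordered-partition bookkeeping, but the underlying argument is the same.
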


\begin{proof}
If a partition $m_{1}+m_{2}+\dotsb +m_{k}=m$ of the number $m$ is given, then there are $%
\bbinom{2m}{2m_{1},2m_{2},\dotsc ,2m_{k}}$ partitions of the set $%
\{1,2,\dotsc ,2m\}$ into a set of $k$ blocks $N_j$ with sizes $2m_{j}$, $%
j=1,2,\dotsc ,k$. The block $N_j$ can be turned into a cyclic
forth-back permutation in $A_{2m_{j}-1}$ many ways, by Lemma\thinspace \ref%
{lem.cfb}. Hence, we get the stated expression for the number of forth-back
permutations of that type.

Moreover, it is easy to see that the sum over all possible expressions of this form gives
the number of all forth-back permutations, which is $A_{2m}$
by Lemma\thinspace \ref{lem.fb}. Indeed, every forth-back permutations $%
\mathfrak{s}$ in $\cS_{2m}$, has a certain number $k$ of cycles and a certain type,
certain lengths $2m_{1},2m_{2},\dotsc ,2m_{k}$ of its cycles. In this respect, every
partition $m_{1}+m_{2}+\dotsb +m_{k}=m$ is possible. Hence, the sum covers all
$A_{2m}$ forth-back permutations, as stated.
\end{proof}

%The same expressions as in the last lemma also occurs in a weighted sum over all
%permutations of a given type. If, for permutations

In our investigations, we will also need to look at a certain notion of \emph{sign},
denoted by $\sn(\mathfrak{s})$, for permutations $\mathfrak{s}\in\cS_{n}$, defined by
\begin{equation}
\sn(\mathfrak{s})\,:=\,\prod_{j=1}^{n}(-1)^{\des(j,\mathfrak{s}%
(j))}\,,  \label{45+}
\end{equation}%
where
\begin{equation}
\des(h,k)\,:=\,%
\begin{cases}
0 & \text{if $h\leq k$}, \\
1 & \text{if $h>k$}.%
\end{cases}
\label{41+}
\end{equation}
%This sign will be involved in certain weighted sums, where it will allow us to cancel
%out some of the summands against each other.
We want to show that $\sum_{\mathfrak{s}\in\cS_{2m}^{\neq}}\sn(\mathfrak{s})=
(-1)^{m}A_{2m}$, where $\cS_{n}^{\neq}$ denotes the set of fixed-point-free
permutations in $\cS_{n}$. To establish this and similar results, we need to introduce
certain equivalence classes of permutations which are based on the notion of a transit
of a permutation. We call $h\in \left\{1,2,...,n\right\} $ a \emph{transit}
%\emph{up-transit} if $\cs^{-1}(h)<h<\cs(h)$ and a \emph{down-transit}
of the permutation $\mathfrak{s\in }\cS_{n}$ if
\begin{equation}
\text{either}\quad\mathfrak{s}^{-1}(h)<h<\mathfrak{s}(h)\quad\text{or}\quad
\mathfrak{s}^{-1}(h)>h>\mathfrak{s}(h)\ .  \label{51}
\end{equation}
Let $\cS_n^T$ denote the set of permutations in $\cS_n$ which contain a transit, and
let $\cC_{n_{1},n_{2},\dotsc ,n_{k}}^T$ be the set of permutations in
$\cC_{n_{1},n_{2},\dotsc,n_{k}}$ which contain a transit. Every permutation
$\mathfrak{s}$ with transit %, say in $\cC_{n_{1},n_{2},\dotsc ,n_{k}}^T$,
contains a unique smallest transit $h$, say inside a cycle $(j_1,j_2,\dotsc,\cs^{-1}(h),
h,\cs(h),\dotsc,j_{\ell})$ of length $\ell$, which we may also write as
\begin{equation}\label{eq.lc}
j_1\mto j_2\mto\dotsb\mto\cs^{-1}(h)\mto h\mto\cs(h)\mto\dotsm\mto j_{\ell}\mto j_1\,.
\end{equation}
We obtain a permutation $\cs'$ of $\{1,2,\dotsc,n\}\!\sm\!\{h\}$ by replacing the chain of
assignments $\cs^{-1}(h)\mto h\mto\cs(h)$ with the shorter chain
$\cs^{-1}(h)\lmto\cs(h)$. Hence, the new permutation $\cs'$ contains the cycle
\begin{equation}\label{eq.sc}
j_1\mto j_2\mto\dotsb\mto\cs^{-1}(h)\lmto\cs(h)\mto\dotsm\mto j_{\ell}\mto j_1\,.
\end{equation}
We define an equivalence relation $\sim$ on the set $\cS_n^T$. For two permutations
$\cs$ and $\crr$ with transit, we write $\cs\sim\crr$ if and only if $\cs$ and $\crr$ have
the same smallest transit $h$, if $\cs'=\crr'$ and if the smallest transit $h$ is missing
from the same cycle in $\cs'$ as in $\crr'$. The equivalence class of $\cs$ is denoted
as $[\cs]$. The equivalence relation $\sim$ can also be restricted to the sets of the form
$\cC_{n_{1},n_{2},\dotsc ,n_{k}}^T$. We have
$[\cs]\sb\cC_{n_{1},n_{2},\dotsc,n_{k}}^T$ whenever
$\cs\in\cC_{n_{1},n_{2},\dotsc,n_{k}}^T$.

\begin{example}
In the $8$-cycle $\cs:=(4,1,8,2,6,7,\mathbf{5},3)$ the number $5$ is the smallest
transit, a \emph{downwards transit} in this case, as $7>5>3$. If we remove it from the
cycle, and reinsert the $5$ as a transit in all possible ways into the remaining $7$-cycle
$(4,1,8,2,6,7,3)$, we get four permutations. The $5$ would not be a transit between
$4$ and $1$, but can be inserted between $1$ and $8$, yielding
$(4,1,\mathbf{5},8,2,6,7,3)$. Similarly, we also obtain $(4,1,8,\mathbf{5},2,6,7,3)$,
$(4,1,8,2,\mathbf{5},6,7,3)$ and the original permutation $(4,1,8,2,6,7,\mathbf{5},3)$.
These four $8$-cycles form the equivalence class $[\cs]$ of $\cs$ with respect to
$\sim$. Interestingly, two of the four $8$-cycles contain the number $5$ as upwards
transit, and their sign is opposite to the sign of the other two $8$-cycles with $5$ as
downwards transit, as one can easily check.
The situation is illustrated in Fig.\,1.%\ref{fig.alt}.
\begin{figure}[t]
\begin{center}
\includegraphics[%width=.8\textwidth, height=.5\textwidth%
scale = .1]{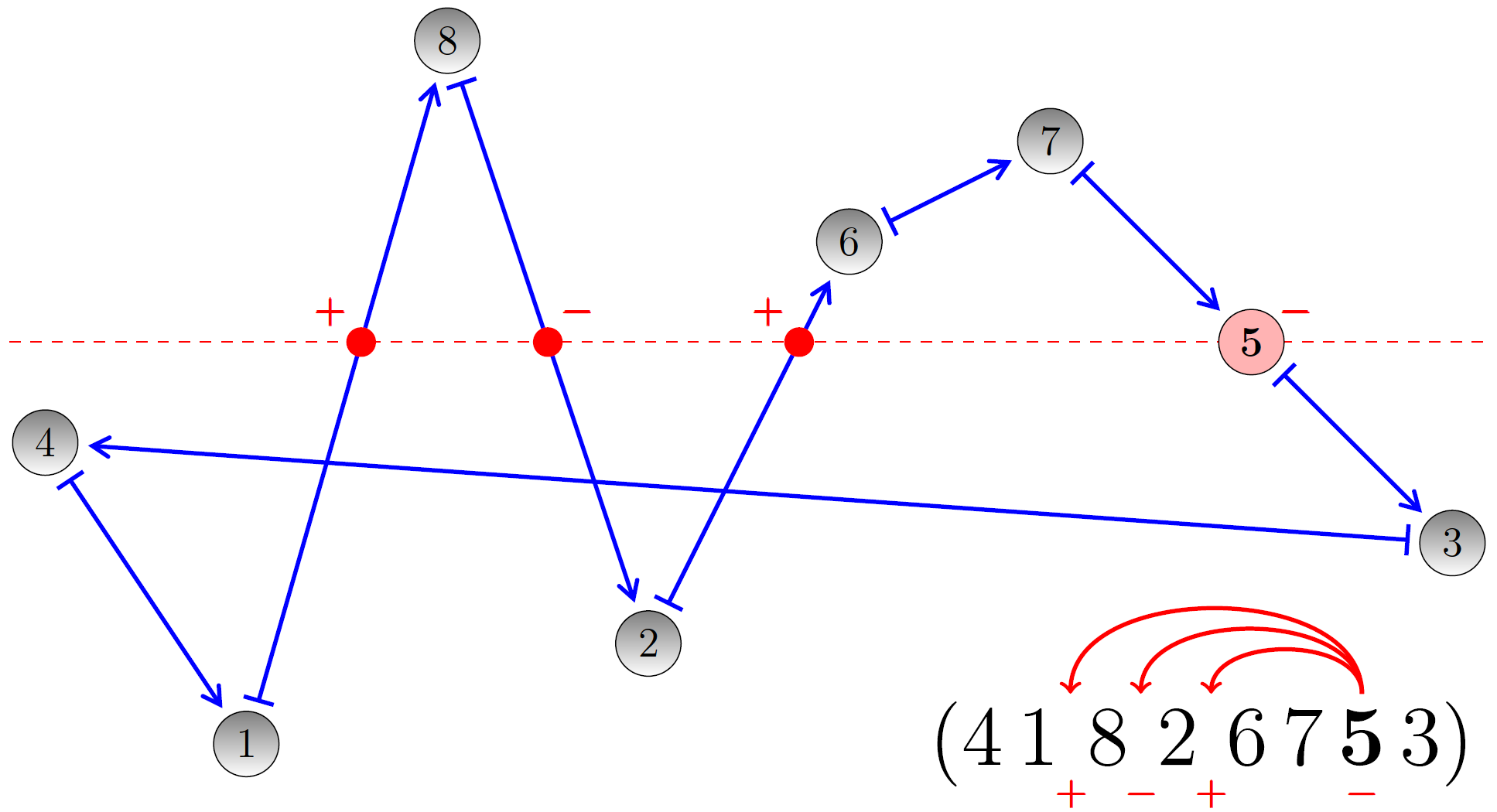}\label{fig.alt}
\vspace{-1em}
\caption{The cycle $(4,1,8,2,6,7,5,3)$ with smallest transit $h=5$.}
\end{center}
\end{figure}
\end{example}

The observation about the sign of the elements in equivalence classes that we made in
this example is no coincidence. We have the following lemma:

\begin{lemma}
\label{lem.eqc} If a permutation $\mathfrak{s}$ contains a transit, then there is an even
number of elements in its equivalence class $[\cs]$. One half of them have negative
sign, and one half have positive sign.
\end{lemma}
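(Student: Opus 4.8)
The plan is to fix a permutation $\cs$ with transit, let $h$ be its smallest transit inside the cycle written in the linear form \eqref{eq.lc}, and analyze exactly how the equivalence class $[\cs]$ is built up. By definition of $\sim$, every element of $[\cs]$ is obtained from the common contracted permutation $\cs'$ by reinserting $h$ as a transit into the particular cycle of $\cs'$ from which it was removed; that cycle, after contraction, reads $j_1\mto\dotsb\mto\cs^{-1}(h)\lmto\cs(h)\mto\dotsm\mto j_\ell\mto j_1$ and has length $\ell-1$. Reinserting $h$ means choosing an edge $a\mto b$ of this $(\ell-1)$-cycle and replacing it by $a\mto h\mto b$; the result lies in $[\cs]$ precisely when $h$ is a transit at that position, i.e.\ when either $a<h<b$ or $a>h>b$. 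So the first step is to enumerate the edges $a\mto b$ of the contracted cycle for which $\{a,b\}$ straddles $h$ in this ordered sense, and to observe that those are exactly the elements of $[\cs]$.

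The key combinatorial step is a parity/sign-reversing argument on this set of straddling edges. Walk around the contracted $(\ell-1)$-cycle and record for each vertex whether it is $<h$ or $>h$ (no vertex equals $h$, since $h$ was removed). This is a cyclic binary word; the straddling edges are exactly the places where the word changes value, and a cyclic word changes value an \emph{even} number of times. Hence $|[\cs]|$ is even. Moreover an ``upward'' straddling edge $a<h<b$ (giving $\cs^{-1}(h)<h<\cs(h)$, an upwards transit) and a ``downward'' edge $a>h>b$ alternate as one traverses the cycle, so there are equally many of each: half the elements of $[\cs]$ have $h$ as an upwards transit and half have it as a downwards transit.

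It remains to check that ``$h$ upwards transit'' versus ``$h$ downwards transit'' matches the claimed splitting by $\sn$. For this, compare the sign \eqref{45+} of a reinsertion $a\mto h\mto b$ with the sign contribution of the removed edge $a\mto b$ in $\cs'$: reinserting $h$ replaces the single factor $(-1)^{\des(a,b)}$ by $(-1)^{\des(a,h)}(-1)^{\des(h,b)}$ and leaves all other factors unchanged, since the rest of the one-line data of the two permutations (suitably read off on the respective ground sets) agrees. A short case check on the three possibilities $a<h<b$, $a>h>b$, and $a,b$ on the same side of $h$ shows that $(-1)^{\des(a,h)+\des(h,b)}$ equals $+1$ for an upwards transit and $-1$ for a downwards transit (or vice versa, up to a global sign depending only on $\cs'$). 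Since this comparison factor is the \emph{only} thing that varies within $[\cs]$, all elements with $h$ an upwards transit share one sign and all with $h$ a downwards transit share the opposite sign. Combined with the equal-halves count from the previous paragraph, this gives exactly the statement: $|[\cs]|$ is even, with half the elements of each sign.

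\textbf{Main obstacle.} The delicate point is the bookkeeping in the last paragraph — making precise that passing from $\cs'$ on $\{1,\dots,n\}\sm\{h\}$ to a reinsertion on $\{1,\dots,n\}$ changes the product in \eqref{45+} only through the one local factor at the inserted position, so that the sign within $[\cs]$ is a fixed constant times $(-1)^{\des(a,h)+\des(h,b)}$. One must be careful that $h$ is the \emph{smallest} transit, so that reinsertion never creates a new smaller transit and never disturbs which cycle $h$ is missing from; this is what guarantees the map from straddling edges to $[\cs]$ is a genuine bijection and that the sign comparison is clean.
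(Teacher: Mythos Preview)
Your approach is essentially the paper's: your ``cyclic binary word changes value an even number of times, and the changes alternate'' is exactly the paper's ``walking around the shortened cycle we cross the altitude $h$ equally often upwards and downwards''. The paper then passes directly from ``equally many upwards and downwards reinsertions'' to ``half of each sign'' without spelling out the sign computation that you attempt.

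There is, however, a slip in your case check. For an upwards transit $a<h<b$ one has $\des(a,h)+\des(h,b)=0$, and for a downwards transit $a>h>b$ one has $\des(a,h)+\des(h,b)=2$, so $(-1)^{\des(a,h)+\des(h,b)}=+1$ in \emph{both} cases, contrary to what you write. What actually distinguishes the two is the factor you remove: $\des(a,b)=0$ when $a<h<b$ but $\des(a,b)=1$ when $a>h>b$. Writing $\sn(\cs'):=\prod_{j\neq h}(-1)^{\des(j,\cs'(j))}$ for the genuinely fixed constant, each reinsertion $\crr$ satisfies
\[
\sn(\crr)\,=\,\sn(\cs')\cdot(-1)^{\des(a,h)+\des(h,b)-\des(a,b)}\,=\,
\begin{cases}
+\sn(\cs') & \text{if } a<h<b,\\
-\sn(\cs') & \text{if } a>h>b,
\end{cases}
\]
which is the dichotomy you need. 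So your conclusion is correct; only the bookkeeping needed this adjustment.

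Your ``main obstacle'' --- that a reinsertion might accidentally acquire a smaller transit and fall out of $[\cs]$ --- is a genuine point that the paper glosses over. It is easily handled: inserting $h$ as a transit between $a$ and $b$ keeps $a$ and $b$ on the same side of their (unchanged) other neighbour as before, so their transit status is unchanged, and no other vertex is touched. Hence the transit set of any reinsertion equals that of $\cs'$ together with $\{h\}$, and $h$ remains the smallest transit.
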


\begin{proof}
Assume the cycle that contains the smallest transit $h$ of $\cs$ is denoted as in
Equation\,\eqref{eq.lc}. If we walk once around the shortened cycle of $\cs'$ in
Equation\,\eqref{eq.sc} and observe the indices $j_1,j_2,j_3,\dotsc$ as a kind of
altitude, then we will cross the altitude $h$ as many times upwards (from below $h$ to
above $h$) as downwards (from above $h$ to
below $h$), see Fig.\,1. %\ref{fig.alt}.
Hence, there are as many ways to reinsert $h$ as an upwards transit and as a
downwards transit. Therefore, one half of the permutations that we obtain have positive
sign, and one half have negative sign. The equivalence class $[\cs]$ of $\cs$ is as
claimed.
\end{proof}

With the help of Lemma\,\ref{lem.eqc}, we can now prove the following theorem:

\begin{theorem}
\label{sz.snd} Let a partition $n_{1}+n_{2}+\dotsc +n_{k}=n$ %of $n$
with $2\leq n_{1}\leq n_{2}\leq \dotsb \leq n_{k}$ be given. Then,
%with $\cC:=\cC_{n_{1},n_{2},\dotsc ,n_{k}}$,
\begin{equation*}
\sum_{\mathfrak{s}\in\cC_{n_{1},n_{2},\dotsc ,n_{k}}\!\!\!\!\!\!\!\!\!\!\!\!}\!\!\!\!\sn(\mathfrak{s})
\,=\,
\begin{cases}
(-1)^{\frac{n}{2}}\bbinom{n}{n_{1},n_{2},\dotsc,n_{k}}\prod_{j=1}^{k}A_{n_{j}-1} & \text{if all $n_j$ are even,}\\
\quad0 & \text{otherwise.}%
\end{cases}
\end{equation*}%
In particular,%
\begin{equation*}
\sum_{\mathfrak{s}\in\cS_{n}^{\neq}}\sn(\mathfrak{s})\,=\,
\begin{cases}
(-1)^{\frac{n}{2}}A_{n} & \text{if $n$ is even,}\\
\quad0 & \text{otherwise.}%
\end{cases}
\end{equation*}%
\end{theorem}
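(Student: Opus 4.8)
The plan is to reduce the first statement to the combinatorial enumeration already available and then derive the second as a special case. First I would observe that the sign statistic $\sn$ is multiplicative over cycles: if $\cs\in\cC_{n_1,\dotsc,n_k}$ decomposes into cycles $c_1,\dotsc,c_k$ supported on disjoint blocks $N_1,\dotsc,N_k$, then $\sn(\cs)=\prod_j\sn(c_j)$, where $\sn(c_j)$ is computed using only the values of $\cs$ on $N_j$. Consequently the sum over $\cC_{n_1,\dotsc,n_k}$ factors: summing first over the $\bbinom{n}{n_1,\dotsc,n_k}$ ways to partition $\{1,\dotsc,n\}$ into blocks of the prescribed sizes, and then independently over the cyclic permutations of each block, we get
\begin{equation*}
\sum_{\cs\in\cC_{n_1,\dotsc,n_k}}\sn(\cs)\,=\,\bbinom{n}{n_1,\dotsc,n_k}\prod_{j=1}^{k}\Bigl(\sum_{c\in\cC_{n_j}}\sn(c)\Bigr)\,,
\end{equation*}
where $\cC_{n_j}$ here means cyclic permutations of an $n_j$-element set (the value of the sum depends only on the size). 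So everything comes down to the single-cycle sum $\sum_{c\in\cC_{\ell}}\sn(c)$.

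Next I would evaluate that single-cycle sum using Lemma\,\ref{lem.eqc}. The key point is that a cyclic permutation $c$ of an $\ell$-element set either contains a transit or does not. If it contains a transit, it lies in an equivalence class $[c]\sb\cC_\ell^T$, and by Lemma\,\ref{lem.eqc} that class has equally many elements of positive and negative sign, so it contributes $0$ to the sum. Hence $\sum_{c\in\cC_\ell}\sn(c)=\sum_{c\in\cC_\ell\sm\cC_\ell^T}\sn(c)$, the sum over transit-free cyclic permutations. I claim a cyclic permutation with no transit is exactly a forth-back cycle: condition \eqref{51} failing for every $h$ means that for every $h$ we are in one of the two forth-back alternatives of \eqref{2} (the definitions are set up precisely so that "not a transit" is "forth-back at $h$", once one checks that $h$ cannot be a fixed point in a cycle of length $>1$). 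In particular, transit-free cyclic permutations exist only when $\ell$ is even, say $\ell=2m_j$, and by Lemma\,\ref{lem.cfb} there are exactly $A_{2m_j-1}$ of them. It remains to pin down their common sign: every forth-back permutation is a product of even cycles, and one checks that on each $2m$-cycle the number of descents among the pairs $(j,\cs(j))$ is exactly $m$ (the cycle alternately ascends and descends $m$ times each when read from its peak), so $\sn$ of a single forth-back $2m_j$-cycle equals $(-1)^{m_j}$. Therefore $\sum_{c\in\cC_{2m_j}}\sn(c)=(-1)^{m_j}A_{2m_j-1}$, and it is $0$ for odd $\ell$.

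Assembling: if some $n_j$ is odd the corresponding factor vanishes, giving $0$; if all $n_j=2m_j$ are even the product becomes $\prod_j(-1)^{m_j}A_{2m_j-1}=(-1)^{m_1+\dots+m_k}\prod_j A_{2m_j-1}=(-1)^{n/2}\prod_j A_{n_j-1}$, which together with the multinomial prefactor is the claimed formula. For the "in particular" statement, note $\cS_n^{\neq}$ is the disjoint union of the $\cC_{n_1,\dotsc,n_k}$ over all partitions of $n$ into parts $\geq 2$; summing the first formula over all such partitions and using Lemma\,\ref{lem.fbd} (precisely the identity $A_n=\sum\bbinom{n}{2m_1,\dotsc,2m_k}\prod A_{2m_j-1}$ when $n=2m$ is even, and the empty sum when $n$ is odd) yields $(-1)^{n/2}A_n$ for $n$ even and $0$ for $n$ odd.

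The main obstacle I anticipate is the sign bookkeeping: verifying cleanly that "no transit at $h$ for every $h$" is equivalent to being forth-back (handling the boundary indices $h=1$ and the interplay with the cyclic structure), and computing that a forth-back $2m$-cycle has $\sn=(-1)^m$. Both are elementary but need care with the conventions in \eqref{2}, \eqref{45+} and \eqref{51}; once the single-cycle case is nailed down, the multiplicativity and the reduction to Lemmas\,\ref{lem.cfb} and \ref{lem.fbd} are routine.
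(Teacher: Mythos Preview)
Your proof is correct and uses the same key mechanism as the paper: cancel all transit-containing permutations via the equivalence classes of Lemma~\ref{lem.eqc}, identify the survivors as forth-back permutations (with common sign $(-1)^{n/2}$), and count them. The only organisational difference is that you first factor the sum over $\cC_{n_1,\dotsc,n_k}$ into single-cycle sums via the multiplicativity of $\sn$ and then invoke Lemma~\ref{lem.cfb} on each factor, whereas the paper applies the transit cancellation directly on $\cC_{n_1,\dotsc,n_k}$ (the equivalence classes respect cycle type) and then cites Lemma~\ref{lem.fbd} in one stroke; both routes are equally short.
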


\begin{proof}
%The idea is that some of the permutations with negative sign cancel against some
%of the permutations with positive sign.
We observe that we can cancel out all permutations in $\cC_{n_{1},n_{2},\dotsc
,n_{k}}$ that contain a transit, that is, all elements of $\cC_{n_{1},n_{2},\dotsc
,n_{k}}^T$. In fact, $\cC_{n_{1},n_{2},\dotsc ,n_{k}}^T$ is partitioned into equivalence
classes, and each of them cancels out by Lemma\,\ref{lem.eqc}. The remaining
elements of $\cC_{n_{1},n_{2},\dotsc ,n_{k}}$ do not contain a transit. Hence, if there
are any remaining permutations in $\cC_{n_{1},n_{2},\dotsc
,n_{k}}\!\sm\cC_{n_{1},n_{2},\dotsc ,n_{k}}^T$, they must be forth-back permutations. In
particular, in this case, all $n_j$ must necessarily be even. Now, Lemma\,\ref{lem.fbd}
yields the first stated result, since all forth-back permutations in $\cS_n^{\neq}$ have
sign $(-1)^{\frac{n}{2}}$. The second formula follows from this and the second formula
in Lemma\,\ref{lem.fbd}, but it can also be deduced from Lemma\,\ref{lem.fb} directly,
after canceling out equivalence classes in $\cS_n^{\neq}$.
\end{proof}

For the number of partitions in some of the previous results, we also have the following
formula:

\begin{lemma}
\label{lem.prt}
\begin{equation*}
\bbinom{n}{n_{1},n_{2},\dotsc ,n_{k}}=\binom{n}{n_{1},n_{2},\dotsc
,n_{k}}\frac{1}{k_{1}!k_{2}!\dotsm k_{r}!}=\frac{n!}{n_{1}!n_{2}!\dotsm
n_{k}!\,k_{1}!k_{2}!\dotsc k_{r}!}\,,
\end{equation*}%
where $k_{1},k_{2},\dotsc ,k_{r}$ are the multiplicities of the different elements in the
multi-set $\{n_{1},n_{2},\dotsc ,n_{k}\}$. (For example, the elements in the multi-set
$\{2,2,2,4,4\}$ have the multiplicities $k_{1}=3$ and $k_{2}=2$, and $r=2$.)
\end{lemma}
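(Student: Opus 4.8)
The plan is to deduce the lemma from a double-counting argument comparing unordered set partitions with ordered ones. First I would introduce the set $\Omega$ of \emph{ordered} set partitions of $\{1,2,\dotsc,n\}$ of the prescribed shape: tuples $(N_1,N_2,\dotsc,N_k)$ of pairwise disjoint, nonempty subsets with $N_1\cup N_2\cup\dotsb\cup N_k=\{1,2,\dotsc,n\}$ and $|N_j|=n_j$ for each $j$. Choosing $N_1$, then $N_2$ among the remaining points, and so on, gives at once
\begin{equation*}
|\Omega|\,=\,\binom{n}{n_1}\binom{n-n_1}{n_2}\dotsm\binom{n_k}{n_k}\,=\,\frac{n!}{n_1!\,n_2!\dotsm n_k!}\,=\,\binom{n}{n_1,n_2,\dotsc,n_k}\,,
\end{equation*}
i.e.\ the ordinary multinomial coefficient; this already supplies the second equality of the lemma once the first is in hand.

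Next I would bring in the set $\Pi$ of unordered partitions $\{M_1,M_2,\dotsc,M_k\}$ with block sizes forming the multiset $\{n_1,\dotsc,n_k\}$, so that by definition $|\Pi|=\bbinom{n}{n_1,\dotsc,n_k}$, together with the forgetful map $\varphi\colon\Omega\to\Pi$ sending $(N_1,\dotsc,N_k)$ to $\{N_1,\dotsc,N_k\}$. This map is surjective, since the blocks of any element of $\Pi$ can be lined up in some order matching the sizes $n_1,\dotsc,n_k$. The crux is to compute the fibers: I claim that each fiber $\varphi^{-1}\bigl(\{M_1,\dotsc,M_k\}\bigr)$ has exactly $k_1!\,k_2!\dotsm k_r!$ elements. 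Indeed, because the blocks of an unordered partition are genuinely \emph{distinct} sets (they are nonempty and pairwise disjoint), an element of this fiber is nothing but a bijection from the set of positions $\{1,2,\dotsc,k\}$ to the set of blocks $\{M_1,\dotsc,M_k\}$ carrying each position $j$ to a block of size $n_j$. Writing the distinct size values as $v_1<v_2<\dotsb<v_r$, where $v_t$ occurs with multiplicity $k_t$ among $n_1,\dotsc,n_k$ (and hence there are exactly $k_t$ blocks of size $v_t$), such a bijection is the same as a choice, made independently for each $t$, of a bijection between the $k_t$ positions that demand size $v_t$ and the $k_t$ blocks of that size; there are $k_t!$ of these for each $t$, hence $\prod_{t=1}^{r}k_t!$ in total.

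Since every fiber of $\varphi$ has the same cardinality $k_1!\dotsm k_r!$, it follows that $|\Omega|=k_1!\dotsm k_r!\cdot|\Pi|$, that is, $\bbinom{n}{n_1,\dotsc,n_k}=\frac{1}{k_1!\dotsm k_r!}\binom{n}{n_1,\dotsc,n_k}$, which is the first claimed equality; combining it with the evaluation of $|\Omega|$ above yields the second. The single point that needs care is the fiber count — in particular the remark that reordering blocks of equal size genuinely yields distinct ordered tuples, and that every tuple in a fiber arises in this way; once that is granted, the remainder is bookkeeping.
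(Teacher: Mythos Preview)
Your proposal is correct and follows essentially the same double-counting argument as the paper: both count ordered set partitions of the prescribed shape via the multinomial coefficient, relate them to unordered ones by the forgetful map, and identify each fiber with the $k_1!\,k_2!\dotsm k_r!$ rearrangements of blocks within equal-size classes. The paper phrases this after a harmless ``without loss of generality $n_1\le\dotsb\le n_k$'' reduction, whereas you handle an arbitrary ordering of the $n_j$ directly, but the content is the same.
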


\begin{proof}
Without loss of generality, we may assume that $n_{1}\leq n_{2}\leq \dotsb
\leq n_{k}$. There are $\binom{n}{n_{1},n_{2},\dotsc ,n_{k}}:=\frac{n!}{%
n_{1}!n_{2}!\dotsm n_{k}!}$ ordered partitions (sequences of blocks) with block sizes
$n_{1},n_{2},\dotsc ,n_{k}$ (in that order). This number
can also be generated by first choosing all $\bbinom{n}{%
n_{1},n_{2},\dotsc ,n_{k}}$ unordered partitions (sets of blocks) with block sizes
$n_{1},n_{2},\dotsc ,n_{k}$, and then arranging each of them in all possible ways as a
sequences of blocks, i.e.\ as ordered partition. Hence, for each unordered partition
$\{N_1,N_2,\dotsc,N_k\}$, we have to see how many ways there are to arrange its
blocks in a sequence with nondecreasing cardinalities (equal to the sequence
$n_{1}\leq n_{2}\leq \dotsb \leq n_{k}$). Ambiguities in this order of the blocks are only
given for blocks of equal size, which correspond to multiplicities of the elements in the
multi-set $\{n_{1},n_{2},\dotsc ,n_{k}\}$. Hence, the number of ways is always
$k_{1}!k_{2}!\dotsc k_{r}!$, where $k_{1},k_{2},\dotsc ,k_{r}$ are
the multiplicities of the different elements in the multi-set $%
\{n_{1},n_{2},\dotsc ,n_{k}\}$. Combining this factor with the number of unordered
partitions yields the relation
\begin{equation}
\bbinom{n}{n_{1},n_{2},\dotsc ,n_{k}}k_{1}!k_{2}!\dotsm k_{r}!\,=\,%
\binom{n}{n_{1},n_{2},\dotsc ,n_{k}}\,,  \label{4}
\end{equation}%
which proves the lemma.
\end{proof}

In this paper we will also consider the number of \emph{descends} of sequences $%
(j_{1},j_{2},\dotsc ,j_{n})$ of $n\geq 1$ integers, that is, the number
\begin{equation}
\des(j_{1},j_{2},\dotsc ,j_{n})\,:=\,\left\vert \left\{ \,\ell \in
\{1,2,\dotsc ,n-1\}\,|\,j_{\ell }>j_{\ell +1}\right\} \right\vert \,,
\label{eq.des}
\end{equation}%
which generalizes $\des(h,k)\in\{0,1\}$ in \eqref{41+}. The number of permutations
$\mathfrak{s}\in \cS_{n}$ for which
\begin{equation}
\des(\mathfrak{s}(1),\mathfrak{s}(2),\dotsc ,\mathfrak{s}(n))=j
\end{equation}%
is the so-called \emph{Eulerian number} $\ebinom{n}{j}$. We follow \cite{Pete} in taking
this as the definition of the Eulerian numbers, but Eulerian numbers also count various
other kinds of objects, see \cite{Pete}. The corresponding generating function is the
so-called \emph{Euler polynomial}
\begin{equation}
S_{n}(\tau )\,:=\,\sum_{\mathfrak{s}\in \cS_{n}}\tau ^{\des(%
\mathfrak{s}(1),\mathfrak{s}(2),\dotsc ,\mathfrak{s}(n))}\,=\,%
\sum_{j=0}^{n-1}\ebinom{n}{j}\tau ^{j}\,.  \label{34}
\end{equation}%
In this paper, we will also need the closely related number of \emph{cyclic} descends,
defined by
\begin{equation}
\cdes(j_{1},j_{2},\dotsc ,j_{n})\,:=\,\des(j_{1},j_{2},\dotsc
,j_{n},j_{1})\,,  \label{35}
\end{equation}%
which has the following statistic (see also \cite{Pete}):

\begin{lemma}
\label{lem.cdes} All permutations $\mathfrak{s}\in \cS_{n}$ have
$0<\cdes(\mathfrak{s}(1),\mathfrak{s}(2),\dotsc ,\mathfrak{s}(n))<n$. For $0<j<n$, the
number of permutations $\mathfrak{s}\in \cS_{n}$ with exactly $j$ cyclic descents is
\begin{equation*}
\bigl|\bigl\{\mathfrak{s}\in \cS_{n}\,|\,\cdes(\mathfrak{s}(1),\mathfrak{s}(2),\dotsc
,\mathfrak{s}(n))=j\bigr\}\bigr|\,=\,n\ebinom{n-1}{j-1}\,.
\end{equation*}
In particular,
\begin{equation}\label{xxy}
\sum_{\mathfrak{s}\in \cS_{n}}\tau ^{\cdes(\mathfrak{s}(1),%
\mathfrak{s}(2),\dotsc ,\mathfrak{s}(n))}\,=\,n\,\tau S_{n-1}(\tau )\,.
\end{equation}
\end{lemma}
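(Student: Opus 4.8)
\textbf{Proof plan for Lemma \ref{lem.cdes}.}

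The plan is to prove the two assertions in turn. First, the bounds $0<\cdes(\mathfrak{s}(1),\dotsc,\mathfrak{s}(n))<n$: writing $(j_1,\dotsc,j_n)=(\mathfrak{s}(1),\dotsc,\mathfrak{s}(n))$, the cyclic descent count $\des(j_1,\dotsc,j_n,j_1)$ counts the ascents-turned-descents around the full cycle $j_1,j_2,\dotsc,j_n,j_1$. Since $\mathfrak{s}$ is a permutation of $\{1,\dotsc,n\}$ with $n\geq 1$, the cyclic sequence is non-constant (unless $n=1$, in which case there is a trivial degenerate check — one should state the convention there, and in fact for $n=1$ the cyclic sequence is $j_1,j_1$ with no descent, so the claim $0<\cdes<n$ fails and the lemma is implicitly for $n\geq 2$; I would add that restriction or note it). For $n\geq 2$, going around the cycle we must come back to where we started, so there is at least one descent and at least one ascent; hence $1\leq\cdes\leq n-1$, giving the strict bounds.

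For the enumerative formula, the key step is a bijection-with-multiplicity argument relating $\cS_n$ to $\cS_{n-1}$. The cleanest route is: given $\mathfrak{s}\in\cS_n$, locate the position $p$ of the value $n$ (so $\mathfrak{s}(p)=n$), and cyclically rotate the word $(\mathfrak{s}(1),\dotsc,\mathfrak{s}(n))$ so that $n$ sits in the last position; deleting $n$ leaves a word which, after the standard order-preserving relabelling, is the one-line notation of a permutation in $\cS_{n-1}$. Crucially, cyclic rotation does not change $\cdes$ (that is exactly what "cyclic" buys us), and removing the maximal entry $n$ from the end of a word decreases the linear descent count by exactly $1$ when $n$ was at the end — because $n$ at the end always contributes one descent to the $n$-term word $(\ldots, n)$ — so $\cdes$ of the original equals $\des$ of the resulting $(n-1)$-permutation plus $1$. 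This map is $n$-to-one: each $\mathfrak{s}\in\cS_{n-1}$ with $\des=j-1$ arises from exactly the $n$ permutations in $\cS_n$ obtained by inserting the new maximal value $n$ into any of the $n$ cyclic gaps of the corresponding cyclic word. Counting gives $n\ebinom{n-1}{j-1}$ permutations in $\cS_n$ with $\cdes=j$.

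Finally, \eqref{xxy} follows by summing: $\sum_{\mathfrak{s}\in\cS_n}\tau^{\cdes(\mathfrak{s}(1),\dotsc,\mathfrak{s}(n))}=\sum_{j=1}^{n-1}n\ebinom{n-1}{j-1}\tau^j=n\tau\sum_{i=0}^{n-2}\ebinom{n-1}{i}\tau^i=n\tau S_{n-1}(\tau)$, using the definition \eqref{34} of $S_{n-1}$.

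The main obstacle I anticipate is pinning down the bijection precisely — in particular making the "cyclic rotation does not change $\cdes$" step airtight and checking the descent-count bookkeeping at the seams (the gap created by deleting $n$, and the effect of the order-preserving relabelling on descents, which is a standard fact but should be invoked carefully). One must also be careful about the boundary position: if $n$ is already in the last slot, no rotation is needed; if $n$ is in the first slot, a single step suffices; these are routine but need the "cyclic word" language to be set up cleanly so that all $n$ positions of $n$ are treated uniformly. A small auxiliary point worth stating explicitly is that the $n$ preimages of a given $(n-1)$-permutation are genuinely distinct in $\cS_n$, which is clear because they place $n$ in distinct positions of the (un-rotated) one-line word after we fix the normalization that $n$ ends up last before deletion — equivalently, the fibers are the cyclic-rotation classes of the lifted words, each of size exactly $n$.
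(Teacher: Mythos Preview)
Your approach is essentially the same as the paper's: both locate the position of the maximum value $n$, cyclically rotate, delete $n$, and observe that this gives an $n$-to-$1$ map from permutations in $\cS_n$ with $j$ cyclic descents to permutations in $\cS_{n-1}$ with $j-1$ ordinary descents. One small wording slip: removing $n$ from the end does \emph{not} decrease the \emph{linear} descent count (since $a_{n-1}<n$ is an ascent); rather, the extra $+1$ comes from the cyclic wrap-around $n\to a_1$, and the ``order-preserving relabelling'' is unnecessary since deleting the value $n$ already leaves a word on $\{1,\dotsc,n-1\}$.
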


\begin{proof} Assume $\mathfrak{s}\in \cS_{n}$ with
$\cdes(\mathfrak{s}(1),\mathfrak{s}(2),\dotsc ,\mathfrak{s}(n))=j$. Since every
sequence of distinct integers has at least one cyclic descent and one cyclic ascent, $j$
cannot be $0$ or $n$, and $0<j<n$ as claimed. Now, let $M$ be such that
$\mathfrak{s}(M)$ is the biggest entry of the sequence $(\mathfrak{s}(1),\mathfrak{s}(2),\dotsc ,%
\mathfrak{s}(n))$. We construct a new shorter sequence
\begin{equation}
\bar{\mathfrak{s}}\,:=\,(\mathfrak{s}(M{+}1),\mathfrak{s}(M{+}2),\dotsc ,\mathfrak{s}(n),\mathfrak{s}%
(1),\mathfrak{s}(2),\dotsc ,\mathfrak{s}(M{-}1))
\end{equation} %- number (71)
by removing $\mathfrak{s}%
(M)$ and gluing together the remaining halves in opposite order. Obviously, $%
\bar{\mathfrak{s}}$ has exactly $j-1$ descends. If we first rotate the
entries of the sequence $(\mathfrak{s}(1),\mathfrak{s}(2),\dotsc ,\mathfrak{s%
}(n))$ and then remove the biggest entry, we still get the same sequence $%
\bar{\mathfrak{s}}$ in the same way. This idea shows that removal of the
biggest entry yields an $n$ to $1$ correspondence $\mathfrak{s}\mapsto \bar{%
\mathfrak{s}}$ between the permutations in $\cS_{n}$ with $j$ cyclic descends and the
permutations in $\cS_{n-1}$ with $j-1$ descends. Hence, there are
$n\ebinom{n-1}{j-1}$ permutations in $\cS_{n}$ with exactly $j$ cyclic descends. In
particular, this number is the coefficient of $\tau ^{j}$ in both polynomial on the left of
(\ref{xxy}) and the polynomial on the right. So these polynomials are equal.
\end{proof}

Using the technique in the proof of Lemma\thinspace \ref{lem.fb}, one can also prove
the following lemma, which might be useful in calculations similar to the ones in our
paper (see also \cite{Pete}):

\begin{lemma}
The number of permutations $\mathfrak{s}\in \cS_{n}$ with $\mathfrak{%
s}(x)<x$ for exactly $j$ points $x\in \{1,2,\dotsc ,n\}$ is the Eulerian number
$\ebinom{n}{j}$ and the corresponding generating function is the Euler polynomial.
\end{lemma}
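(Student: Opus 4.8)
The plan is to run the \emph{transformation fondamentale} of the proof of Lemma\,\ref{lem.fb} once more, this time tracking the \emph{deficiency statistic} $\operatorname{def}(\mathfrak{s}):=\bigl|\{x\in\{1,2,\dotsc,n\}\,|\,\mathfrak{s}(x)<x\}\bigr|$. Given $\mathfrak{s}\in\cS_{n}$, I would write it in cycle notation as in \eqref{3}, normalised so that the largest entry of each cycle comes first and so that these leading entries increase from cycle to cycle; erasing the brackets then produces a word $\bar{\mathfrak{s}}\in\cS_{n}$. As already recalled in the proof of Lemma\,\ref{lem.fb} (cf.\ \cite{FoSc}), the leading entries of the cycles are exactly the left-to-right maxima of $\bar{\mathfrak{s}}$, so the bracketing, and hence $\mathfrak{s}$ itself, can be recovered from $\bar{\mathfrak{s}}$; thus $\mathfrak{s}\mapsto\bar{\mathfrak{s}}$ is a bijection of $\cS_{n}$ onto itself.

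The key step will be to check that $\des\bigl(\bar{\mathfrak{s}}(1),\bar{\mathfrak{s}}(2),\dotsc,\bar{\mathfrak{s}}(n)\bigr)=\operatorname{def}(\mathfrak{s})$. Fix a cycle $(c_{1},c_{2},\dotsc,c_{\ell})$ of $\mathfrak{s}$ with $c_{1}=\max\{c_{1},\dotsc,c_{\ell}\}$, so that $\mathfrak{s}(c_{i})=c_{i+1}$ for $1\le i<\ell$ and $\mathfrak{s}(c_{\ell})=c_{1}$. For $1\le i<\ell$, the index $c_{i}$ is a deficiency of $\mathfrak{s}$ exactly when $c_{i+1}<c_{i}$, i.e.\ exactly when the sub-word $c_{1}c_{2}\dotsm c_{\ell}$ has a descent in position $i$; the index $c_{\ell}$, whose image is the maximum $c_{1}$, is never a deficiency, and a fixed point is never a deficiency either. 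So the number of deficiencies of $\mathfrak{s}$ inside this cycle equals the number of descents of the sub-word $c_{1}c_{2}\dotsm c_{\ell}$. Summing over the cycles, it remains to see that concatenating the cycle-words introduces no new descents: at a seam the last letter of one cycle-word is at most the leading letter of its own cycle, which is strictly less than the leading letter (the first letter) of the next cycle-word, so every seam is an ascent. Hence $\des\bigl(\bar{\mathfrak{s}}(1),\dotsc,\bar{\mathfrak{s}}(n)\bigr)=\operatorname{def}(\mathfrak{s})$.

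Given this, the lemma follows at once: because $\mathfrak{s}\mapsto\bar{\mathfrak{s}}$ is a bijection, for every $j$ the number of $\mathfrak{s}\in\cS_{n}$ with $\operatorname{def}(\mathfrak{s})=j$ equals the number of $\mathfrak{s}\in\cS_{n}$ with $\des(\mathfrak{s}(1),\dotsc,\mathfrak{s}(n))=j$, which is the Eulerian number $\ebinom{n}{j}$; equivalently $\sum_{\mathfrak{s}\in\cS_{n}}\tau^{\operatorname{def}(\mathfrak{s})}=\sum_{\mathfrak{s}\in\cS_{n}}\tau^{\des(\mathfrak{s}(1),\dotsc,\mathfrak{s}(n))}=S_{n}(\tau)$, the Euler polynomial of \eqref{34}. (Alternatively one could first observe, via $\mathfrak{s}\mapsto\mathfrak{s}^{-1}$, that $\operatorname{def}$ is equidistributed with the number of excedances, but establishing the excedance count still passes through the same transformation.)

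The only point needing genuine attention — and therefore the main obstacle, though it is really careful bookkeeping rather than a deep difficulty — is the middle paragraph: one must treat the last element of each cycle and the fixed points separately when counting deficiencies, and one must verify that no descent arises at the seams between consecutive cycle-words. Both facts rest squarely on the two normalisation conventions (each cycle led by its largest element; the cycle leaders increasing) that make the transformation fondamentale a bijection in the first place.
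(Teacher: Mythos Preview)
Your proof is correct and follows precisely the route the paper indicates: the paper does not actually give a proof of this lemma but only remarks that ``using the technique in the proof of Lemma\,\ref{lem.fb}'' (i.e.\ the transformation fondamentale) one can prove it, and your argument carries this out in full, correctly verifying that deficiencies within a cycle match descents of the cycle-word and that no descent occurs at the seams.
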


\section{Moments of quantum L\'{e}vy areas}%- removed the "."

%In view of (\ref{moment}) and ,

To evaluate the moments $\mathbb{E}\left[ \mathcal{\hat{B}}_{[a,b)}^{\left( \sigma \right)
}\right] ^{n}$\!, we need to calculate the number $w_{n}^{(\sigma)}$, as explained in %- explained
(\ref{moment}). By \eqref{eq.111}, we have
\begin{equation}\label{eq.start}
w_{n}^{(\sigma)}dT\otimes dT\cdots \otimes \overset{(n)}{dT}
\,=\,\left(\left(\sum_{h\neq k}(-1)^{\des(h,k)}R_{h,k}\right)^{\!\!\!n\,}\right)_{\!\!\!(1,1,...,\overset{\left( n\right) }{1})}
\end{equation} %- added -1...
%of $\{dT\otimes dT\otimes \dotsm \otimes dT\}$ in the expansion of the $n$th power $F^n$ of
with
%\begin{equation}
%F\,:=\,\sum_{h\neq k}R_{h,k}
%\end{equation}
%%\begin{equation}
%%F\,:=\,\Delta ^{(n)}\bigl(\{d\hat{A}^{\left( \sigma \right) }\otimes d\hat{A}%
%%^{\dagger \left( \sigma \right) }-d\hat{A}^{\dagger \left( \sigma \right)
%%}\otimes d\hat{A}^{\left( \sigma \right) }\}\bigr)\,.%,\quad \ell =1,2,\dotsc ,n,
%%\end{equation}%
%%As described in \eqref{eq.111}, the expansion of $F$ has only $2\binom{n}{2}$ many
%%summands that may contribute to $w_{n}^{(\sigma)}$. These summands can be written
%%as
%and
\begin{equation}
R_{h,k}\,:=\,1\otimes \dots \otimes 1\otimes \overset{(h)}{\{d\hat{A}%
^{\left( \sigma \right) }\}}\otimes 1\otimes \dots \otimes 1\otimes \overset{(k)}{\{d\hat{A}%
^{\dagger \left( \sigma \right) }\}}\otimes 1\otimes \dots \otimes 1\,.  \label{40}
\end{equation}%- removed -1...
%Here, in $R_{h,k}$, $\{d\hat{A}^{\left( \sigma \right) }\}$ is in place $h\in \left\{
%1,2,...,n\right\} $ of the tensor product and $\{d\hat{A}^{\dagger \left( \sigma \right) }\}$ in
%place $k\neq h$.
As in the previous section,
\begin{equation}
\des(h,k)\,:=\,%
\begin{cases}
0 & \text{if $h\leq k$}, \\
1 & \text{if $h>k$},%
\end{cases}
\label{41}
\end{equation}%
and the $n$th power is based on the sticky shuffle product in $\mathcal{T}\left(
\mathcal{L}\right)$ and its extension to the n\textit{th} tensor power %- extension
$\bigotimes\!^{(N)}\mathcal{T}(\mathcal{L})$, as described in \eqref{eq.power} for
$n=2$.

If we set $e:=(h,k)$, then we may also write $R_{e}$ for $R_{h,k}$ and $\des(e)$
for $\des(h,k)$. Using distributivity, % of the $n$-fold tensor sticky shuffle product,
this yields
\begin{equation}
w_{n}^{(\sigma)}dT\otimes dT\cdots \otimes \overset{(n)}{dT}%- instead of \left(\left(\sum_{h\neq k}R_{h,k}\right)^{\!\!\!n\,}\right)_{\!\!\!(1,1,...,\overset{\left( n\right) }{1})}
\,=\,\left(\sum\prod_{\ell=1}^n(-1)^{\des(e_\ell)}R_{e_\ell}\right)_{\!\!\!(1,1,...,\overset{\left(
n\right) }{1})} \,,\label{eq.sum}
\end{equation}
where the sum runs over all $n$-tuples $(e_1,e_2,\dotsc, e_n)$ of pairs $(h,k)$ with
$h\neq k$. We may imagine each pair $e_\ell=(h_\ell,k_\ell)$ as a directed edge, an
\emph{arc}, from $h_{\ell }$ to $k_{\ell }$. Each $n$-tuples $(e_1,e_2,\dotsc, e_n)$ is
then a directed labeled multigraph, or \emph{digraph}, on the vertex set
$V:=\{1,2,\dotsc,n\}$. It is important to keep track of the indices $\ell$ as \emph{labels}
of the arcs $e_{\ell }$, because our product is not commutative,
\begin{equation}
d\hat{A}^{\left( \sigma \right) }d\hat{A}^{\dagger \left( \sigma \right) }
\,=\,\sigma_{\!+}\,dT
\quad\text{and}\quad
d\hat{A}^{\dagger \left( \sigma \right)}d\hat{A}^{\left( \sigma \right) }
\,=\,\sigma_{\!-}\,dT\,,
\end{equation}
and
\begin{equation}
\bigl(\{d\hat{A}^{\left( \sigma \right) }\}\{d\hat{A}^{\dagger \left( \sigma \right) }\}\bigr)_{\!(1)}
\,=\,\sigma_{\!+}\,dT
\quad\text{and}\quad
\bigl(\{d\hat{A}^{\dagger \left( \sigma \right)}\}\{d\hat{A}^{\left( \sigma \right) }\}\bigr)_{\!(1)}
\,=\,\sigma_{\!-}\,dT\,.
\end{equation}
For example, in the case $n=4$, the two arcs $e_{1}=(1,2)$ and $e_{2}=(2,3)$
contribute
\begin{equation}\label{42}
\begin{split}
\bigl(&R_{e_1}R_{e_2}\bigr)_{\!(1,1,1,1)}\\
&\,=\,\bigl((\{d\hat{A}^{\left( \sigma \right)}\}\otimes \{d\hat{A} ^{\dagger \left( \sigma \right) }\}\otimes 1\otimes 1)(1\otimes
\{d\hat{A} ^{\left( \sigma \right) }\}\otimes \{d\hat{A}^{\dagger \left( \sigma \right)}\}\otimes 1)\bigr)_{\!(1,1,1,1)} \\
&\,=\,\bigl(\{d\hat{A}^{\left( \sigma \right)}\}1\bigr)_{\!(1)}\otimes
\bigl(\{d\hat{A} ^{\dagger \left( \sigma \right) }\}\{d\hat{A} ^{\left( \sigma \right) }\}\bigr)_{\!(1)}\otimes
\bigl(1\{d\hat{A}^{\dagger \left( \sigma \right)}\}\bigr)_{\!(1)}\otimes
\bigl(1\cdot1\bigr)_{\!(1)}\\
&\,=\,\sigma_{\!-}\,d\hat{A}^{\left( \sigma \right) }\otimes dT\otimes d\hat{A}^{\dagger \left( \sigma \right) }\otimes 1\,,
\end{split}
\end{equation}
while if the labels $1$ and $2$ are exchanged, $e_{1}=(2,3)$ and $e_{2}=(1,2)$, we
get
\begin{equation}\label{43}
\begin{split}
\bigl(&R_{e_1}R_{e_2}\bigr)_{\!(1,1,1,1)}\\
&\,=\,\bigl((1\otimes \{d\hat{A}^{\left( \sigma \right) }\}\otimes \{d\hat{A}^{\dagger \left( \sigma \right) }\}\otimes 1)(\{d\hat{A}^{\left( \sigma
\right) }\}\otimes \{d\hat{A}^{\dagger \left( \sigma \right) }\}\otimes 1\otimes1)\bigr)_{\!(1,1,1,1)} \\
&\,=\,\sigma _{\!+}\,d\hat{A}^{\left( \sigma \right) }\otimes dT\otimes d\hat{A}^{\dagger \left( \sigma \right) }\otimes 1\,.
\end{split}
\end{equation}

In order to calculate the coefficient $w_{n}^{\left( \sigma \right) }$ of $dT\otimes
dT\otimes \dotsm \otimes dT$ in \eqref{eq.start}, we need to retain only those
summands in \eqref{eq.sum} that contribute scalar multiple of $dT\otimes dT\otimes%- multiples-->multiple
\dotsm \otimes dT$. We may discard other summands. Hence, we do not have to sum
over all digraphs $(e_1,e_2,\dotsc, e_n)$. To see which ones we have to retain, let us
assume that $(e_1,e_2,\dotsc, e_n)$ yields a multiple of $dT\otimes dT\otimes \dotsm
\otimes dT$ in \eqref{eq.sum}. Since the $n$ copies of $d\hat{A}^{\left( \sigma \right) }$
and $n$ copies of $d\hat{A}^{\dagger \left( \sigma \right) }$ in the unexpanded product
$\prod_{\ell=1}^nR_{e_\ell}$ must yield $n$ copies of $dT$\!, one in each possible
position, each vertex of the digraph $(e_1,e_2,\dotsc, e_n)$ must have exactly one
incoming arc and one outgoing arc. Thus, $(e_1,e_2,\dotsc, e_n)$ must consist of
disjoint cyclically oriented cycles that cover $V$\!.
%Every such labeled digraph $D$ contributes either $+1$ or $-1$ to $c_n$.
This allows us to view each arc $e_\ell=(h_\ell,k_\ell)$ as the assignment of a function
value, $h_\ell\mapsto k_\ell=:\mathfrak{s}(h_\ell)$. We obtain a fixed-point-free
permutation $\mathfrak{s}$ on $V=\{1,2,\dotsc ,n\}$. We obtain a second permutation
$\mathfrak{l}$ on $V$ by assigning to each label $\ell\in V$ the vertex $h_\ell$ from
which the arc $e_\ell=(h_\ell,k_\ell)$ originates, $\ell\mto h_\ell=:\cl(\ell)$. The pair
$(\cl,\cs)$ of permutations, $\mathfrak{l}$ in $\cS_{n}$ and $\mathfrak{s}$ in the
set $\cS_{n}^{\neq }$ of fixed-point-free permutations of $%
V=\{1,2,\dotsc ,n\}$, contains the full information about $(e_1,e_2,\dotsc, e_n)$.
%Moreover, every pair $(\mathfrak{l},\mathfrak{s})\in\cS_{n}\times\cS_{n}^{\neq }$ is
%obtained in this way,
Our construction describes a bijection $(e_1,e_2,\dotsc,
e_n)\lmto(\mathfrak{l},\mathfrak{s})$ from the set of digraphs $(e_1,e_2,\dotsc, e_n)$
that contribute a multiple of $dT\otimes dT\otimes\dotsm\otimes dT$ onto the set
$\cS_{n}\times\cS_{n}^{\neq}$. The edges $e_\ell$ of the digraph $(e_1,e_2,\dotsc,
e_n)$ can be recovered from $\cs$ and $\cl$ through the formula
\begin{equation}
e_\ell\,=\,\bigl(\cl(\ell),\cs(\cl(\ell))\bigr)\,,
\end{equation}
which describes the inverse bijection $(\mathfrak{l},\mathfrak{s})\lmto(e_1,e_2,\dotsc,
e_n)$. With this, the term $w_{n}^{(\sigma)}dT\otimes dT\otimes\dotsm\otimes dT$ in
\eqref{eq.sum} can be calculated as %- ref (75)
\begin{equation}
\begin{split}
w_{n}^{(\sigma)}\,dT\otimes&dT\otimes\dotsm\otimes dT\\
&=\,\left(\sum_{(\cl,\cs)\in\cS_{n}\times\cS_{n}^{\neq}}\,\prod_{\ell=1}^n(-1)^{\des(\cl(\ell),\cs(\cl(\ell))}R_{\cl(\ell),\cs(\cl(\ell))}\right)_{\!\!\!(1,1,...,\overset{\left(
n\right) }{1})}\\
&=\,\sum_{(\cl,\cs)\in\cS_{n}\times\cS_{n}^{\neq}}\!\!\!\!\sn(\mathfrak{s})\left(\,\prod_{\ell=1}^nR_{\cl(\ell),\cs(\cl(\ell))}\right)_{\!\!\!(1,1,...,\overset{\left(
n\right) }{1})}\,,
\end{split} %- Brackets ...
\end{equation}%
where, for every $\cl\in\cS_n$,
\begin{equation}
\sn(\mathfrak{s})\,:=\,\prod_{j=1}^{n}(-1)^{\des(j,\mathfrak{s}(j))}
\,=\,\prod_{\ell=1}^{n}(-1)^{\des(\cl(\ell),\cs(\cl(\ell))}\,.\label{eq.snl}
\end{equation}%

%Hence,
%\begin{equation}
%w_{n}^{(\sigma)}\,=\,\sum_{(\cl,\cs)\in\cS_{n}\times\cS_{n}^{\neq}}\!\!\!\!\sn(\mathfrak{s})\,\sigma(\cl,cs)\,,
%\end{equation}
%where each $\sigma(\cs,\cl)$ is the product of some $\sigma_{\!+}$ and some
%$\sigma_{\!-}$, we just have to see how many of each kind.
We have to consider the product $\prod_{\ell=1}^nR_{\cl(\ell),\cs(\cl(\ell))}$, for every
fixed $(\cl,\cs)\in\cS_{n}\times\cS_{n}^{\neq}$. In this product, one $dT$ is produced in %- for every ...
each position $j$, and it comes either with the scalar factor $\sigma_{\!+}$ or with
$\sigma_{\!-}$. If $dT$ arrives as $d\hat{A}^{(\sigma)}d\hat{A}^{\dagger(\sigma)}$ then
we get $\sigma_{\!+}$ as scalar factor, if it arrives as
$d\hat{A}^{\dagger(\sigma)}d\hat{A}^{(\sigma)}$ then we get $\sigma_{\!-}$. To
examine how the $dT$ in position $j$ arrives, let $\ell_1:=\cl^{-1}(j)$ and
$\ell_2=\cl^{-1}(\cs^{-1}(j))$, then $R_{\cl(\ell_1),\cs(\cl(\ell_1))}$ contributes a
$d\hat{A}^{(\sigma)}$ in position $j$ as $\ell_1$th factor, and
$R_{\cl(\ell_2),\cs(\cl(\ell_2))}$ contributes a $d\hat{A}^{\dagger(\sigma)}$ in position
$j$ as $\ell_2$th factor. So, if $\cl^{-1}(\cs^{-1}(j)))>\cl^{-1}(j)$ then $\ell_2>\ell_1$ and
the $d\hat{A}^{\dagger(\sigma)}$ comes after the $d\hat{A}^{(\sigma)}$\!, yielding a
$\sigma_{\!+}$ as scalar factor. In general, the scalar factor of the $dT$ in position $j$
is $\sigma_{\!-}\,(\sigma_{\!+}/\sigma_{\!-})^{\des(\cl^{-1}(\cs^{-1}(j)),\cl^{-1}(j))}$.
%with $\tau:=\sigma_{\!+}/\sigma _{\!-}$.
Therefore,
\begin{equation}
%\begin{split}
w_{n}^{(\sigma)}\,=\,\sigma_{\!-}^n\!\!\!\!\sum_{(\cl,\cs)\in\cS_{n}\times\cS_{n}^{\neq}}\!\!\!\!\sn(\cs)
\prod_{j=1}^{n}(\sigma_{\!+}/\sigma_{\!-})^{\des(\cl^{-1}(\cs^{-1}(j)),\cl^{-1}(j))}\,.%\\
%&=\,\sigma_{\!-}^n\!\!\!\!\sum_{(\cl,\cs)\in\cS_{n}\times\cS_{n}^{\neq}}\!\!\!\!\sn(\cs)
%\prod_{j=1}^{n}(\sigma_{\!+}/\sigma_{\!-})^{\des(\cl(j),\cl(\cs(j)))}\,,
%\end{split}
\end{equation}
We substitute $\cs(j)$ for $j$ and $\cl^{-1}$ for $\cl$, and obtain
%used that $\sn(\cs)=(-1)^n\sn(\cs^{-1})$, as we can see from \eqref{eq.snl} with $\cl:=\cs^{-1}$.
the following theorem:

\begin{theorem}\label{sz.wn}
\begin{equation*}
w_{n}^{(\sigma) }\,=\,\sigma_{\!-}^{n}\sum_{\mathfrak{s}\in \cS_{n}^{\neq }}\sn(\mathfrak{s})
\sum_{\mathfrak{l}\in \cS_{n}}\prod_{j=1}^{n}\tau^{\des(\mathfrak{l}(j),\mathfrak{l}(\mathfrak{s}(j)))}\,,
\end{equation*}%
where $\tau :=\sigma _{\!+}/\sigma _{\!-}$.
\end{theorem}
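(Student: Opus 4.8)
The plan is to gather the computation carried out in the paragraphs preceding the statement into a single chain of equalities and then perform the announced change of summation variables; almost all of the substantive work is already in place, so the proof is mainly one of assembly plus a bijective reindexing.

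First I would start from \eqref{eq.start}, expand the $n$th power of $\sum_{h\neq k}(-1)^{\des(h,k)}R_{h,k}$ by distributivity to reach \eqref{eq.sum}, and read each resulting summand $(e_1,\dots,e_n)$ as a labeled digraph on $V=\{1,\dots,n\}$. The first key point is the reduction already noted: such a digraph contributes a nonzero scalar multiple of $dT\otimes\dots\otimes dT$ only if every vertex has in-degree and out-degree exactly $1$, because position $j$ must absorb exactly one $d\hat{A}^{(\sigma)}$ (supplied by the arc leaving $j$) and exactly one $d\hat{A}^{\dagger(\sigma)}$ (supplied by the arc entering $j$) in order to produce a single $dT$ there. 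Hence the surviving digraphs are the disjoint unions of directed cycles covering $V$, and $(e_1,\dots,e_n)\lmto(\cl,\cs)$ with $\cs\colon h_\ell\mto k_\ell$ (fixed-point-free, there being no loops) and $\cl\colon\ell\mto h_\ell$ is a bijection onto $\cS_n\times\cS_n^{\neq}$, inverted by $e_\ell=(\cl(\ell),\cs(\cl(\ell)))$.

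Next, for a fixed pair $(\cl,\cs)$ I would evaluate the $(1,1,\dots,1)$-component of $\prod_{\ell=1}^nR_{\cl(\ell),\cs(\cl(\ell))}$. The overall sign $(-1)^{\sum_\ell\des(\cl(\ell),\cs(\cl(\ell)))}$ equals $\sn(\cs)$ after the relabeling $j=\cl(\ell)$, as recorded in \eqref{eq.snl}. For the magnitude, observe that the $dT$ occupying position $j$ is the algebra product, in $\mathcal{L}$, of the $d\hat{A}^{(\sigma)}$ deposited there by the arc with label $\ell_1=\cl^{-1}(j)$ and the $d\hat{A}^{\dagger(\sigma)}$ deposited there by the arc with label $\ell_2=\cl^{-1}(\cs^{-1}(j))$; since multiplication in $\mathcal{T}(\mathcal{L})$ is noncommutative this product is $\sigma_{\!+}\,dT$ when $\ell_1<\ell_2$ and $\sigma_{\!-}\,dT$ when $\ell_1>\ell_2$ — the two worked instances \eqref{42} and \eqref{43} are precisely this phenomenon — so the factor contributed by position $j$ is $\sigma_{\!-}\,(\sigma_{\!+}/\sigma_{\!-})^{\des(\cl^{-1}(\cs^{-1}(j)),\cl^{-1}(j))}$. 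Taking the product over $j$ and summing over $(\cl,\cs)$ gives
\[
w_n^{(\sigma)}=\sigma_{\!-}^{n}\sum_{(\cl,\cs)\in\cS_n\times\cS_n^{\neq}}\sn(\cs)\prod_{j=1}^n(\sigma_{\!+}/\sigma_{\!-})^{\des(\cl^{-1}(\cs^{-1}(j)),\cl^{-1}(j))}.
\]
I would then reindex: replacing $j$ by $\cs(j)$ in the product (a bijection of $\{1,\dots,n\}$) turns the exponent into $\des(\cl^{-1}(j),\cl^{-1}(\cs(j)))$, and replacing $\cl$ by $\cl^{-1}$ in the outer sum (a bijection of $\cS_n$) turns it into $\des(\cl(j),\cl(\cs(j)))$; with $\tau:=\sigma_{\!+}/\sigma_{\!-}$ this is exactly the claimed identity.

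The main obstacle is the magnitude bookkeeping in the third step: one must correctly pin down, for each position $j$, which labeled arc leaves the $d\hat{A}^{(\sigma)}$ and which leaves the $d\hat{A}^{\dagger(\sigma)}$, and — because the product is noncommutative — decide the order of their labels, hence the choice between $\sigma_{\!+}$ and $\sigma_{\!-}$. Once that is settled, the sign identity and the two changes of variables are routine.
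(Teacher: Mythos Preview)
Your proposal is correct and follows essentially the same route as the paper: expand \eqref{eq.start} into \eqref{eq.sum}, discard all digraphs except those in which every vertex has in- and out-degree $1$, encode the survivors via the bijection $(e_1,\dots,e_n)\leftrightarrow(\cl,\cs)\in\cS_n\times\cS_n^{\neq}$, read off the sign as $\sn(\cs)$ and the per-position scalar as $\sigma_{\!-}(\sigma_{\!+}/\sigma_{\!-})^{\des(\cl^{-1}(\cs^{-1}(j)),\cl^{-1}(j))}$, and finish with the substitutions $j\mapsto\cs(j)$ and $\cl\mapsto\cl^{-1}$. Your identification of the magnitude bookkeeping as the only delicate step is also exactly where the paper focuses its attention.
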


%Note that $w_{n}^{\left( \sigma \right) }$is a homogenous polynomial in
%$\mathfrak{\sigma }_{\!+}^{2}$ and $\mathfrak{\sigma }_{\!-}^{2}$ of degree n. To
%further analyze and understand $w_{n}^{\left( \sigma \right) }$ let us introduce the
%notations

In this expression for $w_{n}^{\left( \sigma \right) }$ there are many terms that cancel
against each other when we carry out the sum. To remove these unnecessary
summands and bundle together equal terms, we first study the inner sum
\begin{equation}
w_{n}^{\,\cs}(\tau )\,:=\,\sum_{\cl\in\cS_{n}}\prod_{j=1}^{n}\tau^{\des(\cl(j),\cl(\cs(j)))}\,,  \label{46}
\end{equation}%
for a fixed $\cs\in\cS_{n}^{\neq }$. Initially, for simplicity, also assume that there
is only one cycle, of length $n$ in $\mathfrak{s}$. In cycle notation, $\mathfrak{s}=(\mathfrak{s}_{1},\mathfrak{s}%
_{2},\dotsc ,\mathfrak{s}_{n})$ with $\mathfrak{s}_{2}=\mathfrak{s}(%
\mathfrak{s}_{1})$, $\mathfrak{s}_{3}=\mathfrak{s}(\mathfrak{s}_{2})$, etc. In this
particular case, by Lemma\,\ref{lem.cdes},
\begin{equation}
\begin{split}
w_{n}^{\,\mathfrak{s}}(\tau )\,&=\,\sum_{\mathfrak{l}\in \cS_{n}}
\prod_{\ell=1}^{n}\tau^{\des(\mathfrak{l}(\cs_\ell),\mathfrak{l}(\cs_{\ell+1}))}\\
&=\,\sum_{\mathfrak{l}\in \cS_{n}}
\tau^{\cdes(\mathfrak{l}(\mathfrak{s}_{1}),\mathfrak{l}(\mathfrak{s}_{2}),\dotsc ,\mathfrak{l}(\mathfrak{s}_{n}))}\\
\,&=\,\sum_{\mathfrak{r}\in \cS_{n}}\tau ^{\cdes(\mathfrak{r}(1),\mathfrak{r}(2),\dotsc ,\mathfrak{r}(n))}\\
\,&=\,n\,\tau S_{n-1}(\tau )\,,\label{eq.zyc}
\end{split}
\end{equation}%
where $\cS_n(\tau)$ is the Euler polynomial and
$\cdes(\mathfrak{r}(1),\mathfrak{r}(2),\dotsc ,\mathfrak{r}(n))$ denotes the number of
descends of the sequence
$(\mathfrak{r}(1),\mathfrak{r}(2),\dotsc,\mathfrak{r}(n),\mathfrak{r}(1))$.

Formula\,\eqref{eq.zyc} holds only for cyclic permutations $\mathfrak{s}\in
\cS_{n}^{\neq }$. For the general case, suppose that $\mathfrak{s}$ has
$k=k(\mathfrak{s})$ cycles of lengths $n_{1},n_{2},\dotsc ,n_{k}$, say where $2\leq
n_{1}\leq n_{2}\leq \dotsb \leq n_{k}$ and $n_{1}+n_{2}+\dotsb +n_{k}=n$. We say that
$(n_1,n_2,\dotsc,n_k)$ is the \emph{typ} of $\cs$ and write
$\mathfrak{s}\in\cC_{n_{1},n_{2},\dotsc ,n_{k}}$. We have to split the product in the
definition (\ref{46}) of $w_{n}^{\,\mathfrak{s}}(\tau )$ into $k$ parts correspondingly. If
$C_\ell$ denotes the set of the $n_\ell$ elements of the $\ell$th cycle of the fixed given
$\cs\in\cC_{n_{1},n_{2},\dotsc ,n_{k}}$\!, then
%the inner product in \eqref{46} can be written as
\begin{equation}
\prod_{j=1}^{n}\tau^{\des(\cl(j),\cl(\cs(j)))}\,=\,\prod_{\ell=1}^{k}\,\prod_{j\in C_\ell}\tau^{\des(\cl_\ell(j),\cl_\ell(\cs(j)))}\,,
\end{equation}
where $\cl_\ell$ is the restriction of $\cl$ to $C_\ell$, so that
$\cl=\cl_1\cup\cl_2\cup\dotsb\cup\cl_n$. The range of each $\cl_\ell$ can be any
subset $N_\ell\sb V$ of $n_\ell$ elements, provided only that all the subsets $N_\ell$
together form an ordered partition $(N_1,N_2,\dotsc,N_k)$ of $V$. We want to describe
the set $\cS_n$ of permutations $\cl$ in terms of smaller bijections
$\cl_\ell:C_\ell\rightarrow N_\ell$. Let $\mathcal{N}$ denotes the set of all partitions
$N:=(N_1,N_2,\dotsc,N_k)$ of $V$ into $k$ blocks
$N_\ell$ with $|N_\ell|=n_\ell$, let $B_\ell(N)$ %:=\func{Bij}(C_\ell,N_\ell)$
be the set of bijections from $C_\ell$ to $N_\ell$, and let $B(N):=B_1(N)\times
B_2(N)\times\dotsm\times B_n(N)$. With this, the set of permutations $\cS_n$ is
partitioned as
\begin{equation}
\cS_n\,=\,\bigcup_{N\in\mathcal{N}}\{\cl_1\cup\cl_2\cup\dotsb\cup\cl_n\,|\,(\cl_1,\cl_2,\dotsc,\cl_n)\in B(N)
%\cl_\ell\in B_\ell(N)\,\ \text{for $\ell=1,2,\dotsc,n$}
\}\,.
\end{equation}
From that disjoint union we get
\begin{equation}
\begin{split}
w_{n}^{\,\cs}(\tau )\,&=\,\sum_{N\in\mathcal{N}}\,\sum_{\cl\in B(N)}\,\prod_{\ell=1}^{k}\,
\prod_{j\in C_\ell}\tau^{\des(\cl_\ell(j),\cl_\ell(\cs(j)))}\\
&=\,\sum_{N\in\mathcal{N}}\,\prod_{\ell=1}^{k}\,
\sum_{\cl_\ell\in B_\ell(N)}\,\prod_{j\in C_\ell}\tau^{\des(\cl_\ell(j),\cl_\ell(\cs(j)))}\,.
\end{split}
\end{equation}
Here, for all $N\in\mathcal{N}$, the inner sum is
\begin{equation}
\sum_{\cl_\ell\in B_\ell(N)}\,\prod_{j\in C_\ell}\tau^{\des(\cl_\ell(j),\cl_\ell(\cs(j)))}\,=\,n_\ell\,\tau S_{n_\ell-1}(\tau )\,,
\end{equation}
by \eqref{eq.zyc}, because the names of the elements in $C_\ell$ and $N_\ell$ do not
matter.
%The cyclic case can then be applied to each sub-product $\prod_{j\in
%C_\ell}\tau^{\des(\cl(j),\cl(\cs(j)))}$. First, however, we have to decide which $n_{j}$
%labels shall be used as labels for the elements of the $j^{\text{th}} $ cycle. It do not
%have to be the $n_{j}$ elements of the $j^{\text{th}}$ cycle themselves. Any set
%$N_j$ of $n_{j}$ labels can be used, as long as overall, for all $k$ cycles together,
%all $n$ labels $1,2,\dotsc ,n$ are used. Now, there are
%$\binom{n}{n_{1},n_{2},\dotsc ,n_{k}}=\frac{n!}{n_{1}!n_{2}!\dotsm n_{k}!}$ ways to
%partition the labels in such a way that the $j^{\text{th}}$ cycle gets $n_{j}$ labels.
%Once a suitable partition $\{N_1,N_2,\dotsc,N_k\}$ is made, however, it does not
%really matter how its blocks $N_j$ looks like.
Every fixed set $N_\ell$ of $n_\ell$ different numbers is linearly ordered and produces
the same statistic for the cyclic descents, if we consider all sequences that can be
arranged using all elements of $N_\ell$.
%Every set of $n_{j}$ labels gives the same factor $n_{j}\tau S_{n_{j}-1}(\tau )$ for
%the $j^{\text{th}}$ cycle, as derived in (\ref{eq.zyc}).
Using
\begin{equation}
|\mathcal{N}|\,=\,\binom{n}{n_{1},n_{2},\dotsc ,n_{k}}\,:=\,\frac{n!}{n_{1}!n_{2}!\dotsm n_{k}!}\,,
\end{equation}
we obtain
\begin{equation}
w_{n}^{\,\mathfrak{s}}(\tau )\,:=\,\binom{n}{n_{1},n_{2},\dotsc ,n_{k}}%
\prod_{j=1}^{k}n_{j}\,\tau S_{n_{j}-1}(\tau )\,,  \label{eq.wns}
\end{equation}
where $(n_1,n_2,\dotsc,n_k)$ is still the typ of $\cs$, i.e.\
$\mathfrak{s}\in\cC_{n_{1},n_{2},\dotsc ,n_{k}}$.

We can now calculate $w_n^{(\sigma)}$ out of \eqref{eq.wns} and
Theorem\,\ref{sz.wn}. Since we have the disjoint union
\begin{equation}
\cS_n^{\neq}\,=\bigcup_{\!\!\atop{n_{1}+n_{2}+\dotsb+n_{k}=n}{2\leq n_1\leq n_2\leq\dotsb\leq n_k}}\!\!\!\!\!\!\cC_{n_{1},n_{2},\dotsc ,n_{k}}\,,
\end{equation}
we get
\begin{equation}
\begin{split}
w_{n}^{(\sigma) }
&=\,\sigma_{\!-}^{n}\sum_{\mathfrak{s}\in \cS_{n}^{\neq }}\sn(\mathfrak{s})
\binom{n}{n_{1},n_{2},\dotsc ,n_{k}}\prod_{\ell=1}^{k}n_{\ell}\,\tau S_{n_{\ell}-1}(\tau )\\
&=\,\sigma_{\!-}^{n}\!\!\!\!\!\!\!\!\sum_{\!\!\atop{n_{1}+n_{2}+\dotsb+n_{k}=n}{2\leq n_1\leq n_2\leq\dotsb\leq n_k}}\,
\sum_{\mathfrak{s}\in\cC_{n_{1},n_{2},\dotsc ,n_{k}}\!\!\!\!\!\!\!\!\!\!\!\!}\!\!\!\!\sn(\mathfrak{s})
\binom{n}{n_{1},n_{2},\dotsc ,n_{k}}\prod_{\ell=1}^{k}n_{\ell}\,\tau S_{n_{\ell}-1}(\tau )\,.\\
&=\,\sigma_{\!-}^{n}\!\!\!\!\!\!\!\!\sum_{\!\!\atop{n_{1}+n_{2}+\dotsb+n_{k}=n}{2\leq n_1\leq n_2\leq\dotsb\leq n_k}}
\binom{n}{n_{1},n_{2},\dotsc ,n_{k}}\biggl(\prod_{\ell=1}^{k}n_{\ell}\,\tau S_{n_{\ell}-1}(\tau )\biggr)\!\!
\sum_{\mathfrak{s}\in\cC_{n_{1},n_{2},\dotsc ,n_{k}}\!\!\!\!\!\!\!\!\!\!\!\!}\!\!\!\!\sn(\mathfrak{s})\,.
\end{split}
\end{equation}%
Now, Theorem\,\ref{sz.snd} shows that $w_{n}^{(\sigma) }=0$ for odd $n$, and that for
even $n$, $n=2m$,
\begin{equation}
\begin{split}
w_{2m}^{(\sigma) }
\,&=\,(-1)^{m}\sigma_{\!-}^{2m}\!\!\!\!\!\!\!\!\sum_{\!\!\atop{m_{1}+m_{2}+\dotsb+m_{k}=m}{1\leq m_1\leq m_2\leq\dotsb\leq m_k}}\\
&\bbinom{2m}{2m_{1},2m_{2},\dotsc ,2m_{k}}\binom{2m}{%
2m_{1},2m_{2},\dotsc ,2m_{k}}\prod_{\ell=1}^{k}2m_{\ell}A_{2m_{\ell}-1}\tau
S_{2m_{\ell}-1}(\tau )\,.
\end{split}
\end{equation}
Using Lemma\,\ref{lem.prt}, we obtain the following theorem:

\begin{theorem}
For odd $n$, $w_n^{(\sigma)}=0$. For even $n$, $n=2m>0$, we have
\begin{equation*}
w_{2m}^{\left( \sigma \right) }\,=\,(-1)^{m}(2m)!^{2}\sum \frac{\mathfrak{%
\sigma }_{\!-}^{2m-k}\mathfrak{\sigma }_{\!+}^{k}}{k_{1}!k_{2}!\dotsm
k_{r}!}\prod_{j=1}^{k}\frac{A_{2m_{j}-1}}{2m_{j}(2m_{j}{-}1)!^{2}}%
\,S_{2m_{j}-1}(\mathfrak{\sigma }_{\!+}/\mathfrak{\sigma }_{\!-})\,,
\end{equation*}%
where the sum runs over all partitions $m_{1}+m_{2}+\dotsb +m_{k}=m$ with $1\leq
m_{1}\leq m_{2}\leq \dotsb \leq m_{k}$, and where $k_{1}$, $k_{2}$, \dots , $%
k_{r}$ are the corresponding multiplicities of the different elements in the multi-set
$\{m_{1},m_{2},\dotsc ,m_{k}\}.$
\end{theorem}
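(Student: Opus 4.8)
The plan is to start from the formula for $w_{2m}^{(\sigma)}$ displayed immediately above the theorem, namely
\begin{equation*}
w_{2m}^{(\sigma)}\,=\,(-1)^{m}\sigma_{\!-}^{2m}\!\!\!\sum_{\substack{m_{1}+\dotsb+m_{k}=m\\ 1\leq m_{1}\leq\dotsb\leq m_{k}}}\!\!\!\bbinom{2m}{2m_{1},\dotsc,2m_{k}}\binom{2m}{2m_{1},\dotsc,2m_{k}}\prod_{\ell=1}^{k}2m_{\ell}A_{2m_{\ell}-1}\,\tau\,S_{2m_{\ell}-1}(\tau)\,,
\end{equation*}
with $\tau=\sigma_{\!+}/\sigma_{\!-}$, and to reduce it to the asserted closed form by elementary factorial bookkeeping. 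The vanishing for odd $n$ is immediate and has already been noted: any cycle type of a fixed-point-free permutation of $\{1,\dotsc,n\}$ with $n$ odd must involve at least one odd cycle length, so Theorem\,\ref{sz.snd} gives $\sum_{\mathfrak{s}\in\cC_{n_{1},\dotsc,n_{k}}}\sn(\mathfrak{s})=0$ for every type, whence $w_{n}^{(\sigma)}=0$.

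For $n=2m$ the first step is to invoke Lemma\,\ref{lem.prt} in the form $\bbinom{2m}{2m_{1},\dotsc,2m_{k}}=\binom{2m}{2m_{1},\dotsc,2m_{k}}/(k_{1}!\dotsm k_{r}!)$, so that the product of the two multinomial-type factors becomes
\begin{equation*}
\bbinom{2m}{2m_{1},\dotsc,2m_{k}}\binom{2m}{2m_{1},\dotsc,2m_{k}}\,=\,\frac{(2m)!^{2}}{\bigl((2m_{1})!\dotsm(2m_{k})!\bigr)^{2}\,k_{1}!\dotsm k_{r}!}\,.
\end{equation*}
The second step distributes the remaining factors cycle by cycle: the $k$ copies of $\tau$ arising from $\prod_{\ell}\tau\,S_{2m_{\ell}-1}(\tau)$ combine with the prefactor $\sigma_{\!-}^{2m}$ to give $\sigma_{\!-}^{2m}\tau^{k}=\sigma_{\!-}^{2m-k}\sigma_{\!+}^{k}$, and for each $\ell$ one uses $(2m_{\ell})!=2m_{\ell}\,(2m_{\ell}-1)!$ to rewrite $2m_{\ell}A_{2m_{\ell}-1}/(2m_{\ell})!^{2}=A_{2m_{\ell}-1}/\bigl(2m_{\ell}(2m_{\ell}-1)!^{2}\bigr)$. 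Pulling the common factor $(2m)!^{2}$ in front of the sum then produces exactly the stated expression, the sum still ranging over the partitions $m_{1}+\dotsb+m_{k}=m$ with $1\leq m_{1}\leq\dotsb\leq m_{k}$ and $k_{1},\dotsc,k_{r}$ the multiplicities of the distinct values among $m_{1},\dotsc,m_{k}$.

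There is no real obstacle; the only points requiring care are to keep the ordered multinomial coefficient $\binom{2m}{2m_{1},\dotsc,2m_{k}}$ distinct from the unordered set-partition number $\bbinom{2m}{2m_{1},\dotsc,2m_{k}}$ and to combine their product via Lemma\,\ref{lem.prt} in the correct way, and to observe that the number of cycles $k=k(\mathfrak{s})$ is the very quantity that appears as the exponent of $\sigma_{\!+}$ and (through $2m-k$) of $\sigma_{\!-}$. Once these bookkeeping points are respected, the derivation is a routine manipulation of factorials.
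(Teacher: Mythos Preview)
Your proposal is correct and follows exactly the paper's approach: the paper derives the displayed formula you start from and then simply says ``Using Lemma\,\ref{lem.prt}, we obtain the following theorem,'' leaving the factorial bookkeeping implicit. Your write-up merely supplies those routine details (combining the two multinomial factors via Lemma\,\ref{lem.prt}, converting $\sigma_{\!-}^{2m}\tau^{k}$ to $\sigma_{\!-}^{2m-k}\sigma_{\!+}^{k}$, and rewriting $2m_{\ell}/(2m_{\ell})!^{2}$), and your handling of the odd-$n$ case via Theorem\,\ref{sz.snd} also matches the paper.
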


From this and Equation\,(\ref{moment}), we derive our final result:

\begin{theorem}
The nonzero moments of the quantum L\'{e}vy area $\mathcal{\hat{B}}%
_{[a.b)}^{\left( \sigma \right) }$ are%
\begin{equation*}
\begin{split}
\mathbb{E}&\left[ \left( \mathcal{\hat{B}}_{[a.b)}^{\left( \sigma \right)
}\right) ^{2m}\right]\\
&=(2m)!\left( b-a\right) ^{2m}\sum \frac{\mathfrak{\sigma }_{\!-}^{2m-k}%
\mathfrak{\sigma }_{\!+}^{k}}{k_{1}!k_{2}!\dotsm k_{r}!}\prod_{j=1}^{k}%
\frac{A_{2m_{j}-1}}{2m_{j}(2m_{j}{-}1)!^{2}}\,S_{2m_{j}-1}(\mathfrak{\sigma }%
_{\!+}/\mathfrak{\sigma }_{\!-})\,,  \label{final}
\end{split}
\end{equation*}
where the sum runs over all partitions $m_{1}+m_{2}+\dotsb +m_{k}=m$ with $1\leq
m_{1}\leq m_{2}\leq \dotsb \leq m_{k}$, and where $k_{1}$, $k_{2}$, \dots , $%
k_{r}$ are the corresponding multiplicities of the different elements in the multi-set
$\{m_{1},m_{2},\dotsc ,m_{k}\}.$ The $A_{n}$ are Euler zigzag numbers and the
$S_{n}$ are Euler polynomials.
\end{theorem}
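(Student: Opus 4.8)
The plan is to obtain this result immediately by combining the preceding theorem, which evaluates $w_{2m}^{(\sigma)}$ in closed form, with the moment identity (\ref{moment}). That identity reads
\begin{equation*}
\mathbb{E}\left[\mathcal{\hat{B}}_{[a,b)}^{(\sigma)}\right]^{n}\,=\,i^{n}\,w_{n}^{(\sigma)}\,\frac{(b-a)^{n}}{n!}\,.
\end{equation*}
First, for odd $n$ the preceding theorem gives $w_{n}^{(\sigma)}=0$, so all odd moments vanish and only the even moments $n=2m$ can be nonzero; this is what justifies restricting attention to $n=2m$ in the statement.

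For $n=2m$ I would substitute the closed form for $w_{2m}^{(\sigma)}$ supplied by the preceding theorem into the displayed identity. Two cancellations do all the work: the factor $i^{2m}=(-1)^{m}$ cancels against the explicit sign $(-1)^{m}$ occurring in $w_{2m}^{(\sigma)}$, and the factor $(2m)!^{2}$ in $w_{2m}^{(\sigma)}$ cancels against the denominator $(2m)!$ coming from (\ref{moment}), leaving a single $(2m)!$. What survives is $(2m)!\,(b-a)^{2m}$ times the sum over partitions $m_{1}+\dotsb+m_{k}=m$ with $1\leq m_{1}\leq\dotsb\leq m_{k}$ of the terms $\sigma_{-}^{2m-k}\sigma_{+}^{k}/(k_{1}!\dotsm k_{r}!)$ multiplied by $\prod_{j=1}^{k}A_{2m_{j}-1}\,S_{2m_{j}-1}(\sigma_{+}/\sigma_{-})/\bigl(2m_{j}(2m_{j}-1)!^{2}\bigr)$, which is exactly the asserted expression, with $k_{1},\dotsc,k_{r}$ the multiplicities of the distinct values in $\{m_{1},\dotsc,m_{k}\}$.

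Since every ingredient — the moment identity (\ref{moment}), the vanishing of $w_{n}^{(\sigma)}$ for odd $n$, and the explicit value of $w_{2m}^{(\sigma)}$ — is already available, there is no genuine mathematical obstacle; the proof is a one-line substitution. The only point requiring care is the bookkeeping of the scalar prefactors, checking that the signs $i^{2m}$ versus $(-1)^{m}$ and the factorials $(2m)!^{2}$ versus $1/(2m)!$ combine to give precisely the constant $(2m)!\,(b-a)^{2m}$ with no residual sign or stray factorial. The identification of the $A_{n}$ as Euler zigzag numbers and of the $S_{n}$ as Euler polynomials is inherited verbatim from Section 4, so no further justification of those names is needed here.
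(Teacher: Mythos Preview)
Your proposal is correct and matches the paper's own proof, which consists of the single sentence ``From this and Equation\,(\ref{moment}), we derive our final result.'' Your bookkeeping of the sign $i^{2m}=(-1)^m$ cancelling the $(-1)^m$ in $w_{2m}^{(\sigma)}$ and of $(2m)!^2/(2m)!=(2m)!$ is exactly what is needed.
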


\section{\protect\bigskip The classical limit}

\bigskip We calculate the limit of $\mathbb{E}\left[ \left( \mathcal{\hat{B}}%
_{[a.b)}^{\left( \sigma \right) }\right)^{\!2m}\right] $ as $\sigma \rightarrow \infty ,$ or
equivalently, $\sigma _{\!+}\rightarrow \frac{1}{2},$ $\sigma
_{\!-}\rightarrow \frac{1}{2}.$ Putting $\sigma _{\!+}=$ $\sigma _{\!-}=\frac{1}{2}$ in  (\ref%
{final}), we get%
\begin{eqnarray}
&&\!\!\!\!\!\!\!\!\!\!\!\!\!\!\!\underset{\sigma \rightarrow \infty }{\lim }\mathbb{E}\left[ \left(
\mathcal{\hat{B}}_{[a.b)}^{\left( \sigma \right) }\right)^{\!2m}\right]  \\\notag
&=\,&(2m)!\left( \frac{b-a}{2}\right)^{\!2m}\sum \frac{1}{k_{1}!k_{2}!\dotsm
k_{r}!}\prod_{j=1}^{k}\frac{A_{2m_{j}-1}}{2m_{j}(2m_{j}{-}1)!^{2}}%
\,S_{2m_{j}-1}(1)\,,
\end{eqnarray}%
where the sum runs over all partitions $m_{1}+m_{2}+\dotsb +m_{k}=m$, and where
$k_{1}$, $k_{2}$, \dots , $k_{r}$ are the corresponding multiplicities of the elements of
the multi-set $\{m_{1},m_{2},\dotsc ,m_{k}\}$. But, by the definition of the Euler
polynomial,
\begin{equation}
S_{n}(1)\,:=\,\sum_{\cs\in \cS_{n}}1^{\des(\cs(1),\cs(2),\dotsc ,\cs(n))}\,=\,|\cS_{n}|\,=\,n!\,.  \label{61}
\end{equation}%
So, using Lemma\,\ref{lem.fbd} and Lemma\,\ref{lem.prt}, we see that%
\begin{eqnarray}
\underset{\sigma \rightarrow \infty }{\lim }\mathbb{E}\left[ \left(
\mathcal{\hat{B}}_{[a.b)}^{\left( \sigma \right) }\right)^{\!2m}\right]
&\,=\,&(2m)!\left( \frac{b-a}{2}\right)^{\!2m}\sum \frac{1}{k_{1}!k_{2}!\dotsm
k_{r}!}\prod_{j=1}^{k}\frac{A_{2m_{j}-1}}{(2m_{j})!}\notag \\
&\,=\,&\left(\frac{ b-a}{2}\right)^{\!2m}\!A_{2m}\,.
\end{eqnarray}%
This result is in agreement with the main theorem in \cite{LeWi}.


\begin{thebibliography}{99}
\bibitem{Bour} N.Bourbaki, \emph{Algebra I}, Addison-Wesley, Reading MA (1974)

\bibitem{CoEH} P.B.Cohen, T.M.W.Eyre and R.L.Hudson, Higher order It\^{o} product
    formula and generators of
evolutions and flows, International Journal of Theoretical Physics \textbf{34%
}, 1-6 (1995).

\bibitem{ChHu} S.Chen and R.L.Hudson, Some properties of quantum L\'{e}vy area in
    Fock and non-Fock quantum stochastic calculus, Probability and Mathematical
    Statistics \textbf{33, }425-434 (2013).

\bibitem{FoSc} D.Foata and M.Sch\"{u}tzenberger, Th\'{e}orie g\'{e}om%
\'{e}trique des polyn\'{o}mes eul\'{e}riens, Springer Lecture Notes in Mathematics,
\textbf{138} (1970).

\bibitem{Hoff} M.Hoffman, The algebra of multiple harmonic series, Journal of Algebra
    \textbf{194}, 477-495 (1997).

\bibitem{Huds1} R.L.Hudson, Sticky shuffle Hopf algebras and their stochastic
    representations, pp165-181. in \emph{New trends in stochastic analysis and related
    topics, A volume in honour of K.D.Elworthy}, H.Zhao and A.Truman, World Scientific
    (2012).

\bibitem{Huds2} R.L.Hudson, Quantum L\'{e}vy area as a quantum martingale limit, pp
    169-188, in \textit{Quantum probability and Related Topics XXIX}, eds L.Accardi and
    F.Fagnola, World Scientific (2013) .

\bibitem{HuLi} R.L.Hudson and J.M.Lindsay, A non-commutative martingale
    representation theorem for non-Fock quantum Brownian motion, Journal of
    Functional Analysis \textbf{61}, 202--221 (1985).

\bibitem{LeWi} D.Levin and M.Wildon, A combinatorial method of calculating the
    moments of L\'{e}vy area, Trans. Amer. Math. Soc. \textbf{360}, 6695-6709 (2008).

\bibitem{Levy} P.L\'{e}vy, Le mouvement Brownien plan, Amer. Jour. Math. \textbf{62},
    487-550 (1940).

\bibitem{Levy1} P.L\'{e}vy, Wiener's random function and other Laplacian functions, pp
    171-187, in Proc. 2nd Berkeley Symposium Math Statistics and Probability 1950,
    University of California Press (1951).

\bibitem{Part} K.R.Parthasarathy, An introduction to quantum stochastic
    calculus, Birkh\"{a}user (1992).

\bibitem{Pete} T.K.Petersen, Eulerian Numbers, Birkh%
\"{a}user (2015).

\bibitem{Stan} R.P.Stanley, A survey of alternating permutations, Contemporary
    Mathematics \textbf{531}, 165-196 (2010).
\end{thebibliography}
\end{document}